\newtheorem{theorem}{Theorem}[section]
\newtheorem{lemma}[theorem]{Lemma}
\newtheorem{proposition}[theorem]{Proposition}
\newtheorem{corollary}[theorem]{Corollary}
\def\@tocline#1#2#3#4#5#6#7{\relax
  \ifnum #1>\c@tocdepth % then omit
  \else
    \par \addpenalty\@secpenalty\addvspace{#2}%
    \begingroup \hyphenpenalty\@M
    \@ifempty{#4}{%
      \@tempdima\csname r@tocindent\number#1\endcsname\relax
    }{%
      \@tempdima#4\relax
    }%
    \parindent\z@ \leftskip#3\relax \advance\leftskip\@tempdima\relax
    \rightskip\@pnumwidth plus4em \parfillskip-\@pnumwidth
    #5\leavevmode\hskip-\@tempdima
      \ifcase #1
       \or\or \hskip 1em \or \hskip 2em \else \hskip 3em \fi%
      #6\nobreak\relax
    \hfill\hbox to\@pnumwidth{\@tocpagenum{#7}}\par% <---- \dotfill -> \hfill
    \nobreak
    \endgroup
  \fi}
\theoremstyle{definition}
\newtheorem{definition}[theorem]{Definition}
\newtheorem{remark}[theorem]{Remark}
\newtheorem{prob}[theorem]{Problem}
\newtheorem{notation}[theorem]{Notation}
\newtheorem{convention}[theorem]{Convention}
\numberwithin{equation}{section}
\newlist{steps}{enumerate}{1}
\setlist[steps, 1]{label = \textit{Step \arabic*:}}
\newskip\aline \newskip\halfaline
\newcommand\blfootnote[1]{%
  \begingroup
  \renewcommand\thefootnote{}\footnote{#1}%
  \addtocounter{footnote}{-1}%
  \endgroup}
\newcommand{\bR}{\mathbb{R}}
\newcommand{\bZ}{\mathbb{Z}}
\newcommand{\bC}{\mathbb{C}}
\newcommand{\bk}{\mathbbm{k}}
\newcommand{\p}{\mathbf}
\newcommand{\beq}{\begin{equation}}
\newcommand{\eeq}{\end{equation}}
\newcommand{\Om}{\Omega}
\newcommand{\mT}{\mathcal{T}}
\newcommand{\mM}{\mathbb{M}}
\newcommand{\mS}{\mathcal{S}}
\newcommand{\emp}{\emptyset}
\newcommand{\inv}{^{-1}}
\newcommand{\tit}{\textit}
\newcommand{\rsa}{\rightsquigarrow}
\newcommand{\bcomd}{ \begin{center} \begin{tikzcd}}
\newcommand{\ecomd}{ \end{tikzcd} \end{center}}
\newcommand{\sg}{\Sigma}
\newcommand{\ssymbol}[1]{^{\@fnsymbol{#1}}}
\newcommand{\pl}{\partial}
\newcommand{\x}{\times}
\newcommand{\bpf}{\begin{proof}}
\newcommand{\epf}{\end{proof}}
\newcommand{\bseq}{\beq \begin{split}}
\newcommand{\cl}{\overline}
\newcommand{\bal}{\begin{equation} \begin{aligned}}
\newcommand{\eal}{\end{aligned} \end{equation}}
\newcommand{\bdf}{\begin{definition}}
\newcommand{\bprop}{\begin{proposition}}
\newcommand{\bcor}{\begin{corollary}}
\newcommand{\bclm}{\begin{claim}}
\newcommand{\bthm}{\begin{theorem}}
\newcommand{\bex}{\begin{example}}
\newcommand{\brmk}{\begin{remark}}
\newcommand{\bprob}{\begin{prob}}
\newcommand{\bnot}{\begin{notation}}
\newcommand{\bconv}{\begin{convention}}
\newcommand{\bobs}{\begin{observation}}
\newcommand{\bit}{\begin{itemize}}
\newcommand{\ben}{\begin{enumerate}}
\newcommand{\bcas}{\begin{cases}}
\newcommand{\blem}{\begin{lemma}}
\newcommand{\bscl}{\begin{scholium}}
\newcommand{\edf}{\end{definition}}
\newcommand{\eprop}{\end{proposition}}
\newcommand{\ecor}{\end{corollary}}
\newcommand{\eclm}{\end{claim}}
\newcommand{\ethm}{\end{theorem}}
\newcommand{\eex}{\end{example}}
\newcommand{\ermk}{\end{remark}}
\newcommand{\eprob}{\end{prob}}
\newcommand{\enot}{\end{notation}}
\newcommand{\econv}{\end{convention}}
\newcommand{\eobs}{\end{observation}}
\newcommand{\eit}{\end{itemize}}
\newcommand{\een}{\end{enumerate}}
\newcommand{\ecas}{\end{cases}}
\newcommand{\elem}{\end{lemma}}
\newcommand{\escl}{\end{scholium}}
\DeclareMathOperator{\rk}{rank}
\DeclareMathOperator{\Hom}{Hom}
\title[\resizebox{4.5in}{!}{Relating Cut and Paste Invariants and TQFTs}]{Relating Cut and Paste Invariants and TQFTs}
\author{Carmen Rovi, Matthew Schoenbauer}
\address{Department of Mathematics, Indiana University Bloomington, Rawles Hall, 831 E 3rd St, Bloomington, IN 47405}
\email{crovi@indiana.edu}\address{Department of Mathematics, University of Notre Dame, Notre Dame, IN 465561-4618.}
\email{mschoenb@nd.edu}
\begin{document}

\subjclass[2010]{57N70, 57R56 (primary); 18A05 (secondary)}
\maketitle

%\tableofcontents

\blfootnote{This work is supported by NSF REU Grant number $1461061$.}

\section*{Abstract}

%\textcolor{red}{How does ``Relating Cut and Paste Invariants and TQFTs" sound for a title? The only problem I have with this title is that TQFTs already have a lot to do with the cutting and pasting of manifolds.}

 In this paper we shall be concerned with a relation between TQFTs and cut and paste invariants introduced in \cite{Karras}. Cut and paste  invariants, or $SK$ invariants, are functions on the set of smooth manifolds that are invariant under the cutting and pasting operation. Central to the work in this paper are also $SKK$ invariants, whose values on cut and paste equivalent manifolds differ by an error term depending only on the glueing diffeomorphism.
  % weaker \textcolor{red}{Stronger?} invariants, called $SKK$ invariants, whose values on a manifold depend on both the cut and paste class and the  gluing diffeomorphism. 
Here we investigate a surprisingly natural group homomorphism between the group of invertible TQFTs and the group of $SKK$ invariants and describe how these groups fit into an exact sequence. We conclude in particular that all positive real-valued $SKK$ invariants can be realized as restrictions of invertible TQFTs.

All manifolds are smooth and oriented throughout unless stated otherwise.

\section{Introduction}

\subsection{Cut and Paste Invariants}

 Our knowledge of cut and paste invariants is %a somewhat unintended 
 fruit of several decades of exciting developments in differential and algebraic topology. To understand these invariants, mathematicians needed a solid grasp of characteristic classes, cobordism theory, and the signature. The motivation to study these invariants came from the study of the index of elliptic operators.
 
 The 1950s and 1960s saw incredible developments in all of these areas. In 1953, Vladimir Rokhlin uncovered much about the signature; he found a connection between Pontryagin classes and the signature and discovered the signature's importance in cobordism theory. Crucial here is also Thom's cobordism theory, which he developed during those same years. It was also at this time when Friedrich Hirzebruch proved his famous theorem relating the signature and the Pontryagin numbers, an immensely important development in the theory of characteristic classes. Work continued throughout the next decade, and Sergei Novikov first presented the additivity property of the signature in 1966. Once this result was published in 1970, our knowledge of the signature and cobordism was firm enough for work to be done on the $SK$ and $SKK$ invariants.
 
However, it was the study of the index of elliptic operators that actually brought attention to the study of these invariants. This topic had also been substantially developed in this time period, in particular by Michael Atiyah and Isadore Singer who proved the Atiyah-Singer Index Theorem in 1963. This was a groundbreaking development in both analysis and topology, and showed a deep connection between the two fields. In an attempt to investigate certain aspects this work, Klaus Jaenich noticed that the index of elliptic operators had the properties of invariants that were not influenced by cut and paste operations. In 1968 and 1969, he wrote two papers \cite{Janich}, \cite{Janich2} studying these invariants. These papers included very interesting results but lacked a systematic approach to these ideas.

This opened the door for four young authors, Ulrich Karras, Matthias Kreck, Walter Neumann, and Erich Ossa, to give these invariants a more thorough treatment. These four authors published a short book in 1973 titled \tit{Cutting and Pasting of Manifolds; SK Groups}, in which they completely classified these invariants \cite{Karras}. They were the first to use the terms ``$SK$ invariants" and ``$SKK$ invariants." The ``$SK$" stands for ``schneiden" and ``kleben," which mean ``cut" and ``paste" in German. The second $K$ in $SKK$ stands for ``kontrollierbar," the German word for ``controllable."

%This work was part of Hirzebruch's fantastic effort to give Germany an important role in the world of mathematics again. This effort also included organizing  conferences called ``Arbeitstagungen," where the best world mathematicians would attend and discuss their research. These conferences eventually led to the foundation of the Max Planck Institute for Mathematics in Bonn, a well-respected institution entirely devoted to mathematical research. 

\subsection{Topological Quantum Field Theories}

The idea of Topological Quantum Field Theories originated in the 1980s, an era of rapid development of our understanding of the relationship between geometry and physics. The most credit for the initial development of TQFTs is due to Edward Witten and Michael Atiyah. Both did work in mathematics and theoretical physics; Witten was more the physicist and Atiyah more the mathematician. Witten contributed by laying the mathematical foundation for super-symmetric quantum mechanics. This theory is rather complicated, and Atiyah contributed by looking at a simplified, purely mathematical version of Witten's theories. In \cite{Atiyah}  Atiyah axiomatized TQFTs, which, unlike Witten's theory, did not involve any geometric ideas such as curvature or Riemannian metric.% These mathematical structures are still an area of active mathematical research \cite[p.~1-3]{Atiyah}.

\subsection{The origin of this project}
The relation between $SKK$ invariants and invertible TQFTs was first investigated by Matthias Kreck, Stephan Stolz and Peter Teichner. The exact sequence presented in this paper was due to them, even if they had not published their work.

\subsection{Acknowledgements} %First and foremost, both authors are specially grateful to Prof Matthias Kreck for sharing with them the initial idea of the relationship between invertible TQFTs and $SKK$ invariants, and for encouraging the writing of this paper.
The second named author would like to give great thanks to his REU advisor Dr Carmen Rovi for mentoring the research project that has given rise to this paper. He would like also like to thank the REU organization in Bloomington for the very enjoyable experience during the summer of 2017. In addition to this, he would like to thank Prof Frank Connolly for improving his mathematical maturity over the last year. Both authors would like to thank Prof Chris Schommer-Pries for useful conversations about certain ideas needed to prove the existence of the desired split exact sequence.

%\section{Notation}
%\begin{tabular}{ll}
%$[M]_{SK}$ & The $SK$ equivalence class of a closed manifold $M$ (\ref{01})\\
%$[M]_{SKK}$ & The $SKK$ equivalence class of a closed manifold $M$ (\ref{02})\\
%$\mathbb{M}_n$ & The set of diffeomorphism classes of closed oriented $n$-manifolds (\ref{001}) \\
%$ \mathcal{G}(\mathbb{M}_n)$ &The Grothendieck group  of $ \mathbb{M}_n$ (\ref{002}) \\
%$\mM^{\pl}_n$ & \vtop{\hbox{\strut The set of diffeomorphism classes of compact}\hbox{\strut oriented $n$-manifolds with boundary (\ref{003})
%}}  \\
%$SK_n$ & The $n^{th}$ $SK$ group (\ref{03})\\
%$SKK_n$ & The $n^{th}$ $SKK$ group (\ref{04})\\
%$\p{nTQFT}_\bk$ & The category of $n$-dimensional TQFTs (\ref{05})\\
%$\p{nTQFT}_\bk^{\x}$ & The group of invertible $n$-dimensional TQFTs (\ref{06})\\
%$\overline{M}$ & The manifold $M$ with reversed orientation \\
%$D(M)$ & The double $M \underset{id_{\pl M}}{\cup} \overline{M} $ of a manifold $M$ with boundary (\ref{07}) \end{tabular}

\section{Cut and Paste}

In this section we will describe an equivalence relation on manifolds called the ``cut and paste" relation. We will conclude by imposing further structure on these equivalence classes to form the $SK$ and $SKK$ groups. 

We first briefly describe the cut and paste operation on oriented manifolds. To perform this operation on a manifold $M$, cut $M$ along a codimension-1 submanifold $\sg$ with trivial normal bundle and paste the resulting manifold back together via an orientation-preserving diffeomorphism $f:\sg \to \sg$.

\begin{definition} \label{two} \label{01}Two closed oriented manifolds $M$ and $N$ are said to be \tit{cut and paste equivalent} or \tit{SK equivalent} if $N$ can be obtained from $M$ by a finite sequence of cut and paste operations. In this case we write $[M]_{SK}=[N]_{SK}$.
\end{definition}

This is clearly an equivalence relation. A pictorial representation of a nontrivial cut and paste operation is shown in Figure \ref{f2}. The figures on the top right and bottom are mapping tori of the map $f: \sg \to \sg$. 

\begin{figure} [h] 
   \centering
   \def\svgwidth{\columnwidth}
   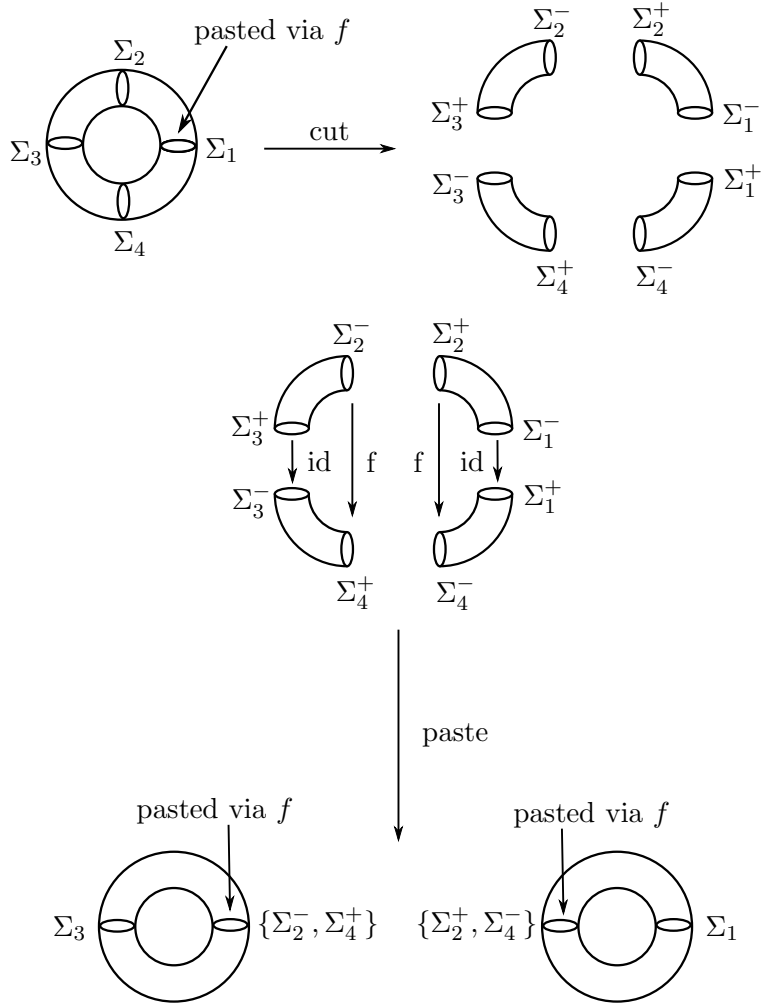
\caption{Cutting and pasting a mapping torus}
\label{f2}
\end{figure}

We now describe the cut and paste invariants.  Let $\mathbb{M}_n \label{001}$ denote the set of all closed oriented $n$-manifolds, and let $M,N \in \mathbb{M}_n$.

\begin{definition} Let $G$ be an abelian group. A function $\Theta:  \mathbb{M}_n \to G$ is said to be an $n$-dimensional {\textit{cut and paste invariant}} or \tit{SK invariant} if the following hold. \begin{itemize} \item $\Theta(M)=\Theta(N)$ whenever $M$ and $N$ are cut and paste equivalent. \item $\Theta(M \coprod N) = \Theta(M) + \Theta(N)$ \end{itemize}
\end{definition}

It follows from the additivity properties of the signature and Euler characteristic that both are $SK$ invariants.

We can also define a weaker class of invariants, which will be especially interesting for later use.

\begin{definition} \label{SKK} Let $G$ be an abelian group.  A function $\xi:  \mathbb{M}_n \to G$ is said to be an $n$-dimensional \tit{SKK invariant} if the following hold. \begin{itemize} \item  Suppose $M$ and $N$ are cut and paste equivalent, i.e. $M= X_1 \cup_{f} X_2$ and $N= X_1 \cup_{g} X_2$ so that $f$ and $g$ are the gluing diffeomorphisms of $M$ and $N$, respectively.  Then $$\xi(M)-\xi(N)=\xi(f,g)$$ where $\xi(f,g) \in G$ depends only on  $f$ and $g$. \item $\xi(M \coprod N) = \xi(M) + \xi(N) $ \end{itemize} \end{definition} 

It is clear from the definitions that every $SK$ invariant is an $SKK$ invariant. For an $SK$ invariant, the ``error" function $\xi(f,g)$ is always zero.
Note that the converse is not true, since there exist $SKK$ invariants which are not $SK$ invariants. One such example is the Kervaire semicharacteristic. 

\begin{definition} Let $M$ be a compact, oriented $n$-dimensional manifold. We define the \tit{Kervaire semicharacteristic} $\chi_{1/2}$ by \beq \chi_{1/2}(M)= \begin{cases} \frac{1}{2} \chi(M) & \mathit{n} \ \mathrm{even} \\[.5em]
\big( \sum \limits _{i=0}  \rk{H_{2i}(M)}\big) \  \mathrm{mod} \ 2 & \mathit{n} \ \mathrm{odd}, \end{cases}  \eeq where $\chi$ is the Euler characteristic.
\end{definition}

 See \cite[p.~44]{Karras} for a proof that $\chi_{1/2}$ is an $SKK$ invariant in dimensions $4n+1$. It is an $SKK$ invariant in all dimensions other than dimensions $4n-1$.

In order to classify $SK$ and $SKK$ invariants, we must define the $SK$ and $SKK$ groups. $ \mathbb{M}_n$ is a  communtative monoid under the disjoint union operation, so we can form its Grothendieck group $\mathcal{G}(\mathbb{M}_n) \label{002}$. We  form quotient groups of this group, which will give us more concise and useful definitions of $SK$ and $SKK$ invariants. 

\begin{definition} Let $R^{SK}_n$ denote the subgroup of $\mathcal{G}(\mathbb{M}_n)$ generated by all elements of the form $[M] \coprod -[N]$ where $[M ]_{SK}=[N]_{SK}$. $\mathcal{G}(\mathbb{M}_n)/R_n^{SK}=SK_n$ is called the {\textit{$n^{th}$ $SK$ group}}. \label{03}
\end{definition}

This definition gives us an alternative description of $SK$ invariants.

\begin{proposition} Let $G$ be an abelian group. A function $\Theta:  \mathbb{M}_n \to G$ is an $n$-dimensional $SK$ invariant if and only if it is an element of the group $ \Hom{(SK_n, G)}$. \end{proposition}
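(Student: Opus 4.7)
The plan is to prove both implications using the universal properties of the Grothendieck group and of quotient groups. The key observation is that the definition of $SK_n$ is set up precisely to make this equivalence formal, so the proof amounts to unwinding the construction.

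For the forward direction, suppose $\Theta:\mathbb{M}_n\to G$ is an $SK$ invariant. Since $\Theta$ is additive with respect to disjoint union and $\mathbb{M}_n$ is a commutative monoid under $\coprod$, the universal property of the Grothendieck group produces a unique group homomorphism $\widetilde{\Theta}:\mathcal{G}(\mathbb{M}_n)\to G$ extending $\Theta$ along the canonical map $\mathbb{M}_n\to\mathcal{G}(\mathbb{M}_n)$. Now whenever $[M]_{SK}=[N]_{SK}$, the $SK$ invariance of $\Theta$ gives $\widetilde{\Theta}([M]-[N]) = \Theta(M)-\Theta(N) = 0$, so every generator of $R_n^{SK}$ lies in $\ker\widetilde{\Theta}$. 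Hence $R_n^{SK}\subseteq\ker\widetilde{\Theta}$, and the universal property of the quotient gives a unique homomorphism $\overline{\Theta}\colon SK_n=\mathcal{G}(\mathbb{M}_n)/R_n^{SK}\to G$ whose composition with the canonical map $\mathbb{M}_n\to SK_n$ recovers $\Theta$.

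For the converse, suppose $\varphi\in\Hom(SK_n,G)$. Composing with the canonical monoid map $q:\mathbb{M}_n\to SK_n$ (sending $M$ to its class $[M]$ in $\mathcal{G}(\mathbb{M}_n)/R_n^{SK}$) gives a function $\Theta:=\varphi\circ q:\mathbb{M}_n\to G$. Additivity under disjoint union is immediate since $q(M\coprod N)=q(M)+q(N)$ in $SK_n$ and $\varphi$ is a homomorphism. The $SK$-invariance is also immediate: if $[M]_{SK}=[N]_{SK}$, then by the definition of $R_n^{SK}$ the element $[M]-[N]$ vanishes in $SK_n$, so $\Theta(M)=\varphi(q(M))=\varphi(q(N))=\Theta(N)$.

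Finally, one must check that these two constructions are mutually inverse, which follows formally from the uniqueness clauses in the universal properties used above: an $SK$ invariant is determined by its values on $\mathbb{M}_n$, and a homomorphism out of $SK_n$ is determined by its values on the images of classes $[M]$ (which generate $SK_n$ as a group). I do not expect any genuine obstacle here; the statement is essentially a formal consequence of the construction of $SK_n$, and the only thing to be careful about is making sure the $SK$-invariance condition translates precisely into vanishing on the generators of $R_n^{SK}$.
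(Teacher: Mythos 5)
Your proof is correct and is exactly the intended argument: the paper states this proposition without proof, treating it as an immediate formal consequence of the construction of $SK_n$ via the Grothendieck group and the quotient by $R_n^{SK}$, which is precisely the unwinding you carry out. No discrepancies to report.
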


We can now give a full description of $SK$ invariants. In the following theorem, $\sigma$ denotes the signature and $\chi$ denotes the Euler characteristic.

\begin{theorem} \cite[p.~7]{Karras}  \label{phew} 

\begin{itemize}
\item[(a)] For $n$ odd, we have \beq SK_n= 0. \eeq 

\item[(b)]For $n \equiv 0$ mod $4$, we have \beq \left(\frac{\chi - \sigma}{2}\right) \oplus \sigma : SK_n \xrightarrow{\cong} \bZ \oplus \bZ, \eeq so the signature $\sigma$ and the Euler characteristic $\chi$ together form a complete set of invariants. 

\item[(c)] For $n \equiv 2$ mod $4$, we have \beq \frac{\chi}{2} : SK_n \xrightarrow{\cong} \bZ, \eeq so $\chi$ is a complete invariant.
\end{itemize}
\end{theorem}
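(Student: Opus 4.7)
The plan is to establish that the proposed invariants descend to well-defined additive homomorphisms on $SK_n$, and then prove they form a complete set by exhibiting generators and reducing arbitrary classes to these generators. For descent, disjoint-union additivity of $\sigma$ and $\chi$ is immediate; cut-and-paste invariance follows from two classical formulas: inclusion-exclusion gives $\chi(X_1 \cup_f X_2) = \chi(X_1) + \chi(X_2) - \chi(\Sigma)$, independent of $f$, and Novikov additivity gives $\sigma(X_1 \cup_f X_2) = \sigma(X_1) + \sigma(X_2)$, also independent of $f$. A parity check using Poincar\'e duality shows $\chi \equiv \sigma \pmod{2}$ in dimension $4n$ (both are congruent to the middle Betti number mod $2$), while in dimension $4n+2$ the middle intersection form is symplectic, so $\chi$ is even. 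In odd dimensions Poincar\'e duality forces $\chi = 0$ and the signature is zero by convention, so no candidate invariants exist.

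Surjectivity is easy: in dimension $4n$, $\mathbb{CP}^{2n}$ and $S^{4n}$ map under $((\chi-\sigma)/2) \oplus \sigma$ to $(n, 1)$ and $(1, 0)$, which together span $\bZ \oplus \bZ$; in dimension $4n+2$, $S^{4n+2}$ has $\chi/2 = 1$. Injectivity is the main obstacle. The strategy rests on two structural inputs. First, for any orientation-preserving diffeomorphism $\phi \colon \Sigma \to \Sigma$, the mapping torus $T_\phi$ is $SK$-equivalent to $\Sigma \times S^1$, since both are obtained by gluing the cylinder $\Sigma \times [0,1]$ along its boundary via $\phi$ or via the identity; hence the $SK$ class of any mapping torus depends only on the fiber. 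Second, a Morse function on $M$ provides a handle decomposition presenting $M$ as an iterated boundary-gluing, and cut-and-paste operations along regular level sets allow one to swap one gluing diffeomorphism for another.

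Combining these inputs, in odd dimensions every $[M]_{SK}$ reduces to combinations of simple models which are visibly trivial --- for instance the relation $[S^n]_{SK} = 2[S^n]_{SK}$ obtained by cutting $S^n$ along two disjoint parallel copies of $S^{n-1}$ and re-gluing by swapping the pieces forces $[S^n]_{SK} = 0$ --- so $SK_n = 0$ for $n$ odd. In even dimensions the same handle-by-handle reduction expresses every class as an integer combination of $[S^n]$ and, when $n = 4m$, $[\mathbb{CP}^{2m}]$, modulo relations. The hardest part will be checking that no relations beyond those detected by $\sigma$ and $\chi$ survive; this requires a careful classification of how arbitrary cut-and-paste moves interact with handle decompositions, and is essentially the core technical content of \cite{Karras}.
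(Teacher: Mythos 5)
There is an important mismatch of expectations here: the paper does not prove this theorem at all --- it is quoted directly from \cite[p.~7]{Karras} --- so there is no internal argument to compare against, and your proposal is really an attempted reconstruction of the cited book's classification. The parts you actually carry out are fine: additivity under disjoint union, gluing-independence of $\chi$ (Mayer--Vietoris, with $\chi(\Sigma)=0$ since the cutting hypersurface is odd-dimensional) and of $\sigma$ (Novikov additivity) show the maps descend to $SK_n$; the parity observations make $(\chi-\sigma)/2$ and $\chi/2$ integer-valued; and the surjectivity computation with $S^{4n}\mapsto(1,0)$, $\bC P^{2n}\mapsto(n,1)$ and $S^{4n+2}\mapsto 1$ is correct.

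The genuine gap is injectivity, i.e.\ the completeness statements, including $SK_n=0$ for odd $n$: you explicitly defer this to ``the core technical content of \cite{Karras}'', so the proposal reduces the theorem to the very result being cited rather than proving it. Moreover, the one concrete relation you do derive is wrong. Cutting $S^n$ along two parallel copies of $S^{n-1}$ produces two disks and a cylinder, and the possible regluings of those three pieces yield either a single (possibly twisted) sphere or $S^n$ disjoint union an $S^{n-1}$-bundle over $S^1$; you can never assemble two copies of $S^n$ from two disks and one cylinder, so this does not give $[S^n]_{SK}=2[S^n]_{SK}$. What the swap trick actually proves, applied to $\Sigma\times S^1$ cut along two fibers, is $[\Sigma\times S^1]_{SK}=2[\Sigma\times S^1]_{SK}$, hence $[\Sigma\times S^1]_{SK}=0$; combined with your (correct) mapping-torus observation this kills all mapping tori, but nothing more. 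The substance of Karras--Kreck--Neumann--Ossa is precisely the remaining step: using handle (Heegaard-type) decompositions to show that, modulo spheres and projective spaces, every class in $SK_n$ is represented by fibrations over $S^1$, from which $SK_n=0$ for odd $n$ (in particular $[S^n]_{SK}=0$, which is not elementary) and the completeness of $\sigma$ and $\chi$ in even dimensions follow. Without that step the theorem is not established.
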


We would like to construct a quotient group of $\mathcal{G}(\mathbb{M}_n)$ that serves the same purpose as $SK_n$, but for $SKK$ invariants.  How can this be done? The stated condition \beq \xi([M])-\xi([N])=\xi(f,g) \eeq when $[M]_{SK}= [N]_{SK}$ and $f$ and $g$ are the gluing diffeomorphisms of $M=X_1 \cup_f X_2$ and $N= X_1 \cup_g X_2$, respectively, is equivalent to the condition \beq \xi([M])-\xi([N])= \xi([M'])-\xi([N']) \eeq when  $[M']_{SK} =[N']_{SK}$ and  $f$ and $g$ are also the gluing diffeomorphisms of $M' = Y_1 \cup_f Y_2$ and $N' = Y_1 \cup_g Y_2$, respectively. This leads to the following definition. 

\begin{definition}  Let $R_n^{SKK}$ denote the subgroup of $\mathcal{G}(\mathbb{M}_n)$ generated by all elements of the form \beq [M] \coprod -[N] \coprod -[M'] \coprod [N'] , \eeq  where $[M]_{SK} =[N]_{SK}$, $[M']_{SK} =[N']_{SK}$ and the gluing diffeomorphisms are given by the above description. $\mathcal{G}(\mathbb{M}_n)/R_n^{SKK}=SKK_n$ is called the {\textit{$n^{th}$ $SKK$ group}}.  \label{04}
\end{definition}

Just as in the case of $SK_n$, we will let $[M]_{SKK} \label{02}$ denote the $SKK$ equivalence class of a closed manifold $M$. We now get our desired description of $SKK$ invariants.

\begin{proposition}  Let $G$ be an abelian group. An $n$-dimensional $SKK$ invariant is an element of the group $ \Hom{(SKK_n, G)}$. \end{proposition}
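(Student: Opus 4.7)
The plan is to apply the universal property of the Grothendieck group together with the standard correspondence between quotient maps and homomorphisms that vanish on a distinguished subgroup. First I would observe that an additive function $\xi\colon \mathbb{M}_n \to G$ (i.e., one satisfying only the second bullet of Definition~\ref{SKK}) is the same datum as a monoid homomorphism from the commutative monoid $(\mathbb{M}_n, \coprod)$ into $(G,+)$, which by the universal property of the Grothendieck construction extends uniquely to a group homomorphism $\widetilde{\xi}\colon \mathcal{G}(\mathbb{M}_n) \to G$. By the universal property of the quotient, $\widetilde{\xi}$ factors through $SKK_n = \mathcal{G}(\mathbb{M}_n)/R_n^{SKK}$ if and only if $\widetilde{\xi}$ vanishes on every generator of $R_n^{SKK}$.

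For the forward implication, I would take $\xi$ to be an $SKK$ invariant in the sense of Definition~\ref{SKK} and evaluate $\widetilde{\xi}$ on a typical generator $[M] \coprod -[N] \coprod -[M'] \coprod [N']$ of $R_n^{SKK}$. Here $M = X_1 \cup_f X_2$, $N = X_1 \cup_g X_2$, $M' = Y_1 \cup_f Y_2$, $N' = Y_1 \cup_g Y_2$, so both pairs $(M,N)$ and $(M',N')$ share the same gluing diffeomorphism pair $(f,g)$. The first bullet of Definition~\ref{SKK} gives
\[
\xi(M) - \xi(N) \; = \; \xi(f,g) \; = \; \xi(M') - \xi(N'),
\]
which is exactly the statement that $\widetilde{\xi}$ vanishes on this generator.

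For the reverse implication, starting from a group homomorphism $\phi\colon SKK_n \to G$, I would set $\xi(M) := \phi([M]_{SKK})$. Additivity under disjoint union is immediate from the fact that $[M \coprod N]_{SKK} = [M]_{SKK} + [N]_{SKK}$ in $SKK_n$. For any two presentations $M = X_1 \cup_f X_2$, $N = X_1 \cup_g X_2$ and $M' = Y_1 \cup_f Y_2$, $N' = Y_1 \cup_g Y_2$ with common gluing pair $(f,g)$, the element $[M] \coprod -[N] \coprod -[M'] \coprod [N']$ lies in $R_n^{SKK}$ and so is killed by $\phi$, giving $\xi(M) - \xi(N) = \xi(M') - \xi(N')$. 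Hence this common value depends only on $(f,g)$, and defining $\xi(f,g)$ to be this common difference recovers the first bullet of Definition~\ref{SKK}.

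The only real content is unpacking the definition of the subgroup $R_n^{SKK}$ into the assertion that the difference $\xi(M)-\xi(N)$ across a cut--and--paste move depends only on the gluing data $(f,g)$; this is precisely the reformulation made in the paragraph preceding Definition~\ref{04}, so once it is invoked the rest of the argument is bookkeeping via the universal properties of $\mathcal{G}(\mathbb{M}_n)$ and of the quotient $\mathcal{G}(\mathbb{M}_n) \twoheadrightarrow SKK_n$.
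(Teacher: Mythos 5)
Your argument is correct and is essentially the paper's own reasoning: the paper offers no separate proof, treating the proposition as immediate from Definition \ref{04} and the reformulation of the error-term condition in the paragraph preceding it, which is exactly the equivalence you unpack via the universal properties of $\mathcal{G}(\mathbb{M}_n)$ and of the quotient by $R_n^{SKK}$. Nothing in your write-up deviates from that intended route; it simply makes the bookkeeping explicit.
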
 

\section{Cobordisms and $SKK$ Groups}

In this section we briefly describe cobordisms, which is needed both for the classification of $SKK$ invariants and the definition of TQFTs.

\begin{definition} Let $ \sg_0$ and $\sg_1$ be closed oriented $(n-1)$-manifolds. An  \tit{{}{oriented n-cobordism}} $M:\sg_0 \rsa \sg_1$ is a manifold $M$ along with orientation-preserving diffeomorphisms  \beq \phi_{\textnormal{in}}:\sg_0 \to \overline{\pl_{\textnormal{in}}M} \ \  \mathrm{and} \ \  \phi_{\textnormal{out}}:\sg_1 \to \pl_{\textnormal{out}}M,\eeq where \beq \pl M = \pl_{\textnormal{in}} M \coprod \pl_{\textnormal{out}} M.\eeq  Here $\pl_{\textnormal{in}}M$ is called the \tit{in-boundary} of $M$ and $\pl_{\textnormal{out}}M$ is called the \tit{out-boundary} of $M$. Similarly, $\phi_{\textnormal{in}}$ and $\phi_{\textnormal{out}}$ are call the \tit{in-boundary diffeomorphism} of $M$ and the \tit{out-boundary diffeomorphism} of $M$, respectively.
\end{definition}

The cobordism relation is an equivalence relation and the set $\Omega_{n-1}$ consisting of all oriented cobordism classes of $(n-1)$-dimensional manifolds forms an abelian group with disjoint union as composition operation.

\medskip
%We will from this point forward refer to all oriented cobordisms simply as ``cobordisms" for brevity. \medskip 

Note that there is a natural gluing operation on two cobordisms $M: \sg_0 \rsa \sg_1$ and $N: \sg_1 \rsa \sg_2$. The gluing diffeomorphism $\pl_{\textnormal{out}} M \to \overline{\pl_{\textnormal{in}} N}$ is the map $\phi'_{\textnormal{in}} \circ \phi_{\textnormal{out}} \inv$, where $\phi'_{\textnormal{in}}$ is the in-boundary diffeomorphism $\sg_1 \to \overline{\pl_{\textnormal{in}}N}$ of $N$. We will denote the resultant cobordism $MN$.

\begin{definition} Two cobordisms $M: \sg_0 \rsa \sg_1$ and $N: \sg_0 \rsa \sg_1$ are said to be \tit{{}{equivalent}} if there exists an orientation-preserving diffeomorphism $\psi$ making the following diagram commute.
\bcomd & M & \\
\sg_0 \arrow{ur}{\phi_{\textnormal{in}}} \arrow{dr}[swap]{\phi'_{\textnormal{in}}} & & \sg_1 \arrow{dl}{\phi'_{\textnormal{out}}} \arrow{ul}[swap]{\phi_{\textnormal{out}}} \\
& N \arrow{uu}[swap]{\psi}& \label{40}
\ecomd
In this case we write $M \sim N$.

\end{definition}

%\begin{definition} Let $\sg_0$ and $\sg_1$ be closed oriented $(n-1)$-dimensional manifolds. $\sg_0$ and $\sg_1$ are called \tit{cobordant} if there exists an $n$-dimensional cobordism $M: \sg_0 \rsa \sg_1$.  \end{definition}

%When two manifolds $\sg_0$ and $\sg_1$ are cobordant we write $\sg_0 \sim_{\Omega} \sg_1$, or $[\sg_0]_\Omega=[\sg_1]_\Omega$. The set $\Omega_n$ of such equivalences classes in dimension $n$ with the disjoint union operation is called the $n$-cobordism group. 

%\medskip

We now relate the cobordism groups  to the $SKK$ groups. 

\begin{theorem} \cite[p.~44]{Karras} The homomorphism $SKK_n \to \Omega_n$ that assigns to each manifold its cobordism class is a surjective $SKK$ invariant. \end{theorem}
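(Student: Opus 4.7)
First, since $M \mapsto [M]_\Omega$ is additive under disjoint union, it extends to a group homomorphism $\Phi \colon \mathcal{G}(\mathbb{M}_n) \to \Omega_n$ from the Grothendieck group. Surjectivity of the induced map $SKK_n \to \Omega_n$ is immediate, since every oriented cobordism class is represented by some closed oriented $n$-manifold $P$, whose class $[P] \in SKK_n$ is sent to $[P]_\Omega$. The substantive content of the theorem is the well-definedness, that is, showing that $\Phi$ vanishes on the subgroup $R_n^{SKK}$.

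A generator of $R_n^{SKK}$ has the form $[M] - [N] - [M'] + [N']$, where $M = X_1 \cup_f X_2$, $N = X_1 \cup_g X_2$, $M' = Y_1 \cup_f Y_2$, and $N' = Y_1 \cup_g Y_2$ for fixed orientation-preserving diffeomorphisms $f, g \colon \Sigma \to \Sigma$. So the task is to exhibit an oriented $(n+1)$-manifold $W$ whose boundary is $\overline{M} \sqcup N \sqcup \overline{N'} \sqcup M'$; this immediately yields the identity $[M] + [N'] = [N] + [M']$ in $\Omega_n$.

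I would assemble $W$ from the four cobordism cylinders $X_1 \times I$, $X_2 \times I$, $Y_1 \times I$, $Y_2 \times I$ together with a fifth ``connecting'' piece: a $\Sigma$-bundle $E$ over a compact oriented 2-manifold-with-corners $P$, whose boundary $\partial P$ consists of four arcs. Each of the four components of $\partial E$ lying over these arcs is diffeomorphic to $\Sigma \times I$, and each is glued to one of the side boundaries $\partial X_i \times I$ or $\partial Y_j \times I$ by a fiber-wise diffeomorphism. The key is to choose $P$ and the monodromies of $E$ so that, at each of the four corners of $P$, the combined effect of the monodromy and the fiber-wise twists produces exactly one of the desired gluings $f$ or $g$. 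With the correct choice, after smoothing corners, the four closed components of $\partial W$ come out as $\overline{M}, N, \overline{N'}, M'$, as required.

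The main obstacle is that the simplest attempt, using only $X_1 \times I \sqcup X_2 \times I$ with a single diffeomorphism of $\Sigma \times I$ closing the sides, requires a pseudo-isotopy on $\Sigma$ from $f$ to $g$, which in general does not exist. The crucial insight is to use both pairs $(X_1, X_2)$ and $(Y_1, Y_2)$ simultaneously, interwoven through a non-trivial bundle $E$ over a 2-surface $P$ of sufficiently rich topology: the monodromy of $E$ absorbs the failure of pseudo-isotopy and localises all non-trivial gluing data at the four corners of $P$. Verifying that a suitable pair $(P, E)$ exists for given $f, g, \Sigma$ satisfying all four corner constraints simultaneously, and checking that the orientations match so that the four boundary components really are $\overline{M}, N, \overline{N'}, M'$ rather than their orientation reverses, is the main technical step of the proof.
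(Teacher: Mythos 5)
The paper itself offers no proof of this statement---it simply cites \cite[p.~44]{Karras}---so your argument has to stand on its own. Your reduction is fine: extend $M\mapsto[M]_\Omega$ to the Grothendieck group, observe surjectivity, and check vanishing on the generators $[M]-[N]-[M']+[N']$ of $R_n^{SKK}$, i.e.\ exhibit an oriented $(n+1)$-manifold with boundary $\overline{M}\sqcup N\sqcup M'\sqcup\overline{N'}$. But that bordism \emph{is} the content of the theorem, and your proposal does not construct it: you posit a $\Sigma$-bundle $E$ over a surface-with-corners $P$ whose monodromy and fiber-wise twists realize $f$ and $g$ at the four corners, and then you yourself declare the existence of a suitable pair $(P,E)$ and the orientation bookkeeping to be ``the main technical step.'' That step is exactly what is missing, and as sketched it is doubtful: if $\partial P$ consists of four arcs, so $P$ is a disk with corners, the bundle is trivial and you are back to needing a pseudo-isotopy from $f$ to $g$; if $P$ is given richer topology, the monodromy around $\partial P$ is constrained (e.g.\ to a product of commutators in $\pi_0\mathrm{Diff}^+(\Sigma)$, which need not be a perfect group), and you never show that the four corner conditions can be met simultaneously.

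The standard argument is simpler and needs neither both pairs at once nor any nontrivial monodromy. Glue $X_1\times I$ to $X_2\times I$ along only \emph{part} of their sides: identify $\partial X_1\times[0,\tfrac13]$ with $\partial X_2\times[0,\tfrac13]$ via $f\times\mathrm{id}$, and $\partial X_1\times[\tfrac23,1]$ with $\partial X_2\times[\tfrac23,1]$ via $g\times\mathrm{id}$. After smoothing corners this is a compact oriented $(n+1)$-manifold whose boundary is $(X_1\cup_f X_2)\sqcup\overline{(X_1\cup_g X_2)}\sqcup T$, where $T$ is the union of the two leftover side cylinders $\Sigma\times[\tfrac13,\tfrac23]$ glued end-to-end via $f$ at one end and $g$ at the other, i.e.\ (up to orientation) the mapping torus of $g^{-1}\circ f$. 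Hence $[M]-[N]$ equals, up to a fixed sign, the bordism class of this mapping torus, which depends only on $f$ and $g$; the identical construction for $(Y_1,Y_2)$ gives the same value for $[M']-[N']$, so every generator of $R_n^{SKK}$ maps to zero in $\Omega_n$. This is the point you were circling: the failure of pseudo-isotopy is not ``absorbed'' by monodromy but recorded as a mapping-torus error term, which is precisely why the bordism class is an $SKK$ invariant yet not an $SK$ invariant. If you insist on a single $W$ with all four manifolds on its boundary, simply glue the two partial-gluing bordisms along their common mapping-torus boundary component.
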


\begin{theorem} \cite[p.~44]{Karras} \label{class} The following sequence is exact: \beq 0 \to I_n \to SKK_n \to \Omega_n \to 0, \eeq where \beq I_n = \begin{cases} \bZ  & n \equiv 0 \ \mathrm{mod} \ 2 \\  \bZ_2 & n \equiv 1 \ \mathrm{mod} \ 4 \\ 0 & n \equiv 3 \ \mathrm{mod} \ 4 \end{cases} \eeq In addition, $\chi$ splits the sequence in dimensions divisible by $4$, and $\chi_{1/2}$ splits the sequence in dimensions $n \equiv 1,2 \ \mathrm{mod} \ 4$.
\end{theorem}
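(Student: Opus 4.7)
The plan is to verify the exact sequence in three stages: first, that the map $\pi\colon SKK_n \to \Omega_n$ sending $[M]_{SKK} \mapsto [M]_{\Omega}$ is a well-defined surjection; second, that $\ker\pi \cong I_n$ as stated; and third, that $\chi$ and $\chi_{1/2}$ furnish the claimed splittings.

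Well-definedness of $\pi$ follows from the observation that if $M = X_1 \cup_f X_2$ and $N = X_1 \cup_g X_2$ are SK-related, then $M$ is already cobordant to $N$: glue $X_1 \times I$ to $X_2 \times I$ along $\pl X_1 \times I$ using an isotopy between $f$ and $g$ to produce a cobordism. Hence the four-term SKK relation maps to a trivial element of $\Omega_n$, and $\pi$ descends. Surjectivity is immediate since every cobordism class has a closed manifold representative.

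To identify $\ker\pi$ with $I_n$, I would construct a map $I_n \to SKK_n$ by picking explicit generators in each dimension class: the class $[S^n]$ for $n$ even (null-cobordant, bounds $D^{n+1}$, with $\chi(S^n) = 2$); in dimension $n \equiv 1 \pmod 4$, an explicit null-cobordant manifold with $\chi_{1/2} = 1$; and the zero map when $n \equiv 3 \pmod 4$. These classes lie in $\ker\pi$ by construction, and injectivity of the map follows since $\chi$ (respectively $\chi_{1/2}$) takes distinct values on $k$-fold disjoint unions of the generator. The substantive content is the opposite inclusion: every null-cobordant $M$ with trivial invariant is SKK-trivial. My approach is to fix a handle decomposition of a null-cobordism $W$ of $M$ and track $[\pl W]_{SKK}$ as handles are added in order. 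Each handle attachment realizes the boundary change as an SK-move whose SKK-correction can be computed explicitly from the handle data; the total correction accumulates to $\chi$ in even dimensions and $\chi_{1/2}$ in dimensions $n \equiv 1 \pmod 4$, while in dimension $4k+3$ the corrections cancel, so null-cobordant manifolds are SKK-equivalent to $\emptyset$.

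Once the kernel is identified, the splittings come for free: $\chi$ and $\chi_{1/2}$ are already $SKK$ invariants (as noted after Definition~\ref{SKK}), and their values on the chosen generators of $I_n$ are precisely the generators of $\bZ$ and $\bZ_2$, so they restrict to the identity on $I_n \hookrightarrow SKK_n$. The principal obstacle is the handle-theoretic step identifying $\ker\pi$ with $I_n$: this is where the dependence of $I_n$ on $n \bmod 4$ arises, and it requires careful bookkeeping of gluing corrections arising from middle-dimensional handles in each parity case. This technical computation is carried out in detail in \cite{Karras}.
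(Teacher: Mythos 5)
This theorem is not proved in the paper at all: it is quoted verbatim from \cite[p.~44]{Karras}, so there is no internal argument to compare yours against, and your proposal has to stand on its own. It does not, for two reasons. First, your well-definedness argument for $\pi\colon SKK_n \to \Omega_n$ is broken: you propose to glue $X_1 \times I$ to $X_2 \times I$ ``using an isotopy between $f$ and $g$,'' but $f$ and $g$ are arbitrary orientation-preserving diffeomorphisms and are in general not isotopic --- if they were, $X_1\cup_f X_2$ and $X_1\cup_g X_2$ would already be diffeomorphic and the cut-and-paste operation would be trivial. Bordism invariance of cutting and pasting is true but requires a different argument (for instance, the naive gluing of $X_1\times I$ and $X_2\times I$ along collars, using $f$ near one end and $g$ near the other, exhibits $M \sqcup \overline{N}$ as bordant to a mapping torus of $\Sigma$, and one must separately know that mapping tori bound); alternatively you could simply invoke the preceding theorem of the paper, which already asserts that assigning to a manifold its cobordism class is a well-defined surjective $SKK$ invariant.

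Second, the heart of the theorem --- that $\ker\pi$ is exactly $\bZ$ generated by $[S^n]$ for $n$ even, $\bZ_2$ for $n\equiv 1 \pmod 4$, and $0$ for $n \equiv 3 \pmod 4$ --- is only gestured at via an unspecified handle-by-handle bookkeeping and then explicitly deferred to \cite{Karras}. Since the statement being proved \emph{is} the theorem of \cite{Karras}, this is circular: the mod-$4$ dichotomy and the appearance of torsion are precisely the content that a proof must supply. There is also an internal slip in the splitting step: with $[S^n]$ as the generator of $I_n$, one has $\chi(S^n)=2$, so $\chi$ restricted to $I_n$ is multiplication by $2$ rather than the identity; the honest retractions are $\chi_{1/2}=\chi/2$ in dimensions $\equiv 1,2 \pmod 4$ and a combination such as $(\chi-\sigma)/2$ (compare Theorem \ref{phew}) in dimensions divisible by $4$, so the claim that the invariants ``restrict to the identity on $I_n$'' needs this correction before it can be used.
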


Thus the $SKK$ invariants are none other than linear combinations of the Euler characteristic, the Kervaire semicharacteristic, and bordism invariants. %Much is known about bordism invariants. One particular result, making use of the theory of characteristic classes, will be especially useful to us.

%\begin{theorem} \cite[p.~216]{Milnstas} $\Omega_n$ is finite if $n \not\equiv 0$ mod $4$. $\Omega_{4n}$ is free of rank $p(n)$, the number of partitions of $n$. The Pontryagin numbers are a complete invariant on $\Omega_{4n}$. \label{Pontryagin} \end{theorem}

\section{Topological Quantum Field Theories and the Group of Invertible TQFTs \label{TQFT} }

In this section we give the definition of a TQFT. Furthermore we will describe a multiplication operation on TQFTs, which will allow us to define the group of invertible TQFTs.

\begin{definition} \label{dTQFT}An \tit{{}{n-dimensional oriented TQFT}} is a symmetric monoidal functor $\mT$ from the $n$-cobordism category to the vector space category, assigning to each closed oriented $(n-1)$-manifold $\sg$ a $\bk$-vector space $\mT(\sg)$ and to each oriented $n$-dimensional cobordism $M: \sg_0 \to \sg_1$ a linear map $\mT(M): \mT(\sg_0) \to \mT(\sg_1)$, satisfying the following properties:
\begin{enumerate}
\item Two equivalent cobordisms have the same image. \beq M \sim N \implies \mT(M)=\mT(N)\eeq
\item A glued cobordism goes to a composition of linear maps. \beq \mT(MN)= \mT(M) \circ \mT(N)\eeq
\item A cylinder cobordism gets sent to the identity map. \beq\mT(\sg \times I)= Id_{\mT(\sg)}\eeq
\item Disjoint unions of $(n-1)$-manifolds and cobordisms get sent to a tensor product of vector spaces and linear maps, respectively. \beq \mT(\sg \coprod \sg')= \mT(\sg) \otimes \mT(\sg') \eeq \beq \mT(M \coprod N) = \mT(M) \otimes \mT(N) \eeq
\item The empty manifold $\emptyset$ gets sent to the ground field $\bk$. \beq \mT(\emptyset)=\bk \eeq
\end{enumerate}
\end{definition}

%A TQFT is represented pictorially in Figure \ref{41}.
%\begin{figure} [h]
   %\centering
   %\def\svgwidth{\columnwidth}
   %\input{TQFT.eps_tex}
   %\caption{A TQFT}
   %\label{41}
%\end{figure}

Similarly to the case of cobordisms, we will refer to all oriented TQFTs as ``TQFTs" for brevity.

\medskip

We can now form a \textit{category} of TQFTs, which we will denote by $\p{nTQFT_\bk} \label{05}$, where the objects are TQFTs and the arrows are natural transformations of TQFTs. This is a symmetric monoidal functor category, which has a natural symmetric monoidal structure. 

 \begin{definition} The product of two TQFTs $\mathcal{T}_1$ and $\mathcal{T}_2$ is the TQFT \beq \mT_1 \otimes \mT_2 \eeq that assigns to each $(n-1)$-manifold $\Sigma$ the vector space \beq \mathcal{T}_1(\Sigma) \otimes \mT_2(\Sigma) \eeq and to each $n$-cobordism $M: \Sigma_0 \rsa \Sigma_1$ the linear map  \beq \mT_1{(M)} \otimes \mT_2{(M)} : \mT_1{(\sg_0)} \otimes \mT_2{(\sg_0)} \to \mT_1{(\sg_1)} \otimes \mT_2{(\sg_1)}.\eeq \end{definition} 
 
The trivial TQFT, which sends each $(n-1)$-manifold to $\bk$ and each $n$-cobordism to the identity, is an identity element under the tensor product operation. 

We wish to study the invertible objects in $\p{nTQFT_\bk}$, that is, the TQFTs  $\mT$ with an inverse $\mT'$ such that $\mT \otimes \mT'$ is the trivial TQFT. It is a consequence of the axioms of TQFTs that TQFTs only assign finite-dimensional vector spaces to closed $(n-1)$-manifolds  (See \cite[p.~31]{Kock}). Since tensoring multiplies the dimension of finite-dimensional vector spaces, each vector space that an invertible TQFT $\mT$ assigns to an $(n-1)$-manifold must be 1-dimensional, i.e. isomorphic to $\bk$. 

Since all linear maps $\bk \to \bk$ are simply scalar multiplication, each map that an invertible TQFT $\mT$ assigns to an $n$-manifold can be canonically associated with a scalar. All linear maps assigned by invertible TQFTs must be invertible, that is, multiplication by a nonzero scalar.

It is clear that the set of invertible TQFTs forms a group under the composition operation in $\p{nTQFT}_\bk$. We will denote this group as $\p{nTQFT}_\bk^{\x} \label{06}$.

What we will do next combines many of the ideas that we have presented so far. Our goal is to determine how invertible TQFTs evaluate two cut and paste equivalent closed manifolds, considered as cobordisms $\emp \rsa \emp$. 

Let $\mT$ be an invertible TQFT and let $M$ and $N$ be cut and paste equivalent closed manifolds, with gluing diffeomorphisms $f$ and $g$, respectively, as in Figure \ref{43}. 

\begin{figure} [h]
   \centering
   \def\svgwidth{\columnwidth}
   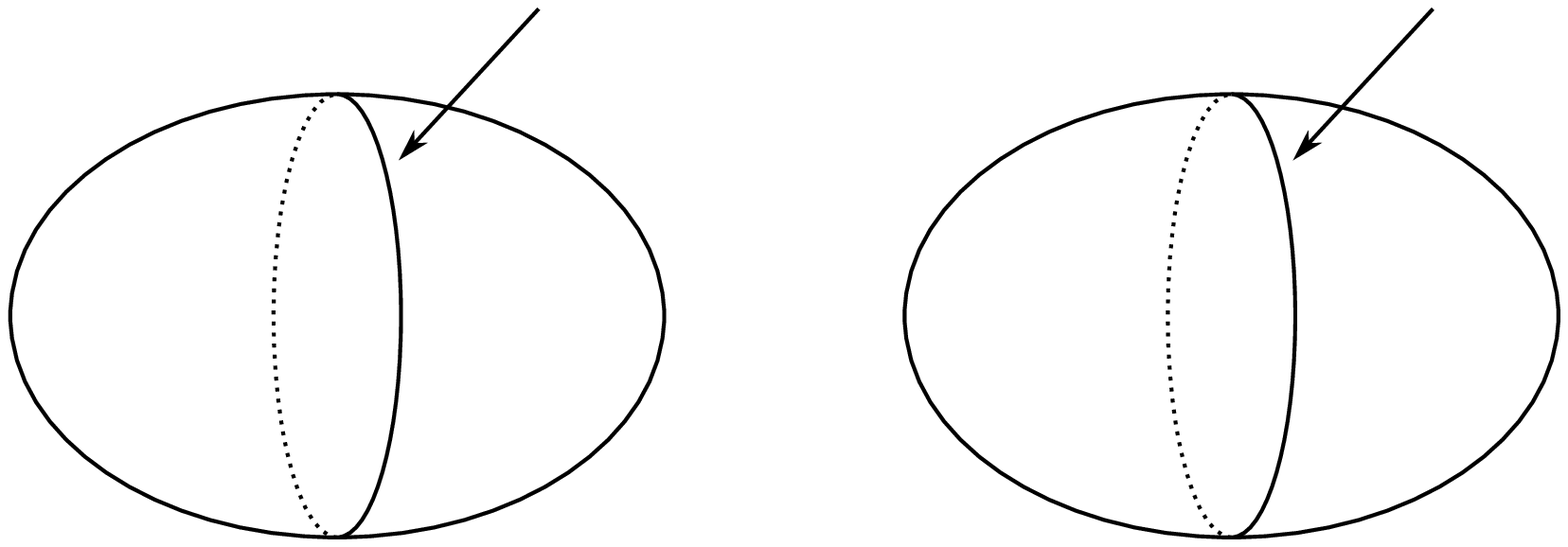
    \caption{Cut and paste equivalent manifolds}
   \label{43}
\end{figure}

 Also, let $\delta$ be the canonical isomorphism $\Hom_\bk{(\bk, \bk)} \to \bk$, where $\Hom_\bk{(\bk, \bk)} $ is the vector space of $\bk$-linear automorphisms of $\bk$. Now using the fact that $\Sigma$ has a collar neighborhood in both $M$ and $N$, we can replace $M$ and $N$ with the equivalent cobordisms \beq M_1C_fM_2 \ \ \mathrm{and} \ \ M_1C_gM_2, \eeq where $C_f$ and $C_g$ denote the mapping cylinders of $f$ and $g$ respectively, as in Figure \ref{44}.
 
 \begin{figure} [h]
   \centering
   \def\svgwidth{\columnwidth}
   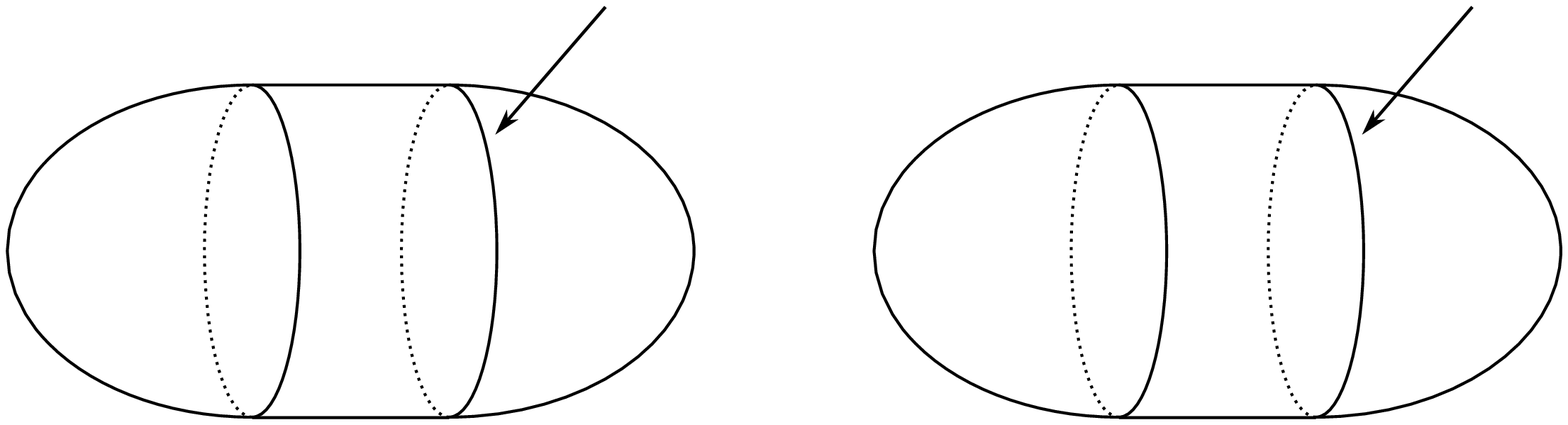
   \label{44}
   \caption{Equivalent cobordisms}
\end{figure}

Now evaluating $\mT$ on both cobordisms and taking a quotient, we see an interesting relation. \beq \frac{\delta(\mT(M_1C_fM_2))}{\delta(\mT(M_1C_gM_2))}= \frac{\delta(\mT(M_1))\cdot \delta(\mT(C_f)) \cdot \delta(\mT(M_2))}{\delta(\mT(M_1))\cdot \delta(\mT(C_g)) \cdot \delta(\mT(M_2))}= \frac{\delta(\mT(C_f))}{\delta(\mT(C_g))} \eeq Note that the above equation is valid because all scalars are required to be nonzero by the invertibility of $\mT$.

\section{Results}

We now relate the ideas of invertible TQFTs and $SKK$ invariants. Let  $\bk^\x$ be the multiplicative group on $\bk-\{0\}$. The relation will be expressed by means of a group homomorpism \beq \Psi_n: \p{nTQFT}^\x_{\bk} \to \Hom{(SKK_n, \bk^\x)}. \eeq  

Let $\mT \in \p{nTQFT}^\x_{\bk}$. If $X$ is a smooth $n$-dimensional manifold, then we set $\Psi_n(\mT)([X])=\delta(\mT([X]))$.
%We first define a homomorphism \beq \Psi_n(\mT): \mathcal{G}(\mathbb{M}_n) \to \bk^\x \eeq as follows. If $[X] \in \mathcal{G}(\mathbb{M}_n)$, then we set $\Psi_n(\mT)([X])=\delta(\mT([X]))$. That this is a group homomorphism follows from the TQFT axioms. 
Now let $M$ and $N$ be cut and paste equivalent closed oriented $n$-manifolds, where $f$ and $g$ are the gluing diffeomorphisms of $M$ and $N$, respectively. We have shown that \beq \frac{\Psi_n(\mT)(M)}{\Psi_n(\mT)(N)}= \frac{\delta(\mT(M))}{\delta(\mT(N))}=\frac{\delta(\mT(C_f))}{\delta(\mT(C_g))}=\Psi_n(\mT)(f,g). \eeq The notation is a bit different since we are dealing with a multiplicative group, but this is precisely the first criterion in the description of $SKK$ invariants in Definition \ref{SKK}.

 Thus $\Psi_n(\mT)$ induces a homomorphism $SKK_n \to \bk^\x$. %which we will continue to denote $\Psi_n(\mT)$. 
 Hence $\Psi_n(\mT)$ is an $SKK$ invariant on $n$-manifolds, and $\Psi_n$ is a homomorphism $\p{nTQFT}^\x_{\bk} \to \Hom{(SKK_n, \bk^\x)}$. 

This homomorphism $\Psi_n$ gives a natural relationship between invertible TQFTs and $SKK$ invariants. Note that this relationship only makes sense if $G$ from Definition \ref{SKK} is the multiplicative group of a field. This causes a problem for the $SKK$ invariants we have given, since the target groups here include $\bZ$, which is not isomorphic to the multiplication group of any field. However, the problem is solved by the exponential map $\exp: x \to e^x$. This exponential map $\exp$ sends $\bZ$ monomorphically to a subgroup of $\bR^\x$. This allows us to think of the our additive integer and rational-valued invariants as elements of $\Hom{(SKK_n,\bR^\x)}$.

%In general, if there is a monomorphism $f:G \to \bk^\x$, then there is a natural monomorphism \beq \tilde{f}: \Hom{(SKK_n,G)} \to \Hom{(SKK_n,\bk^\x)}. \eeq Thus if $G$ is isomorphic to a subgroup of some $\bk^\x$, then we can uniquely identify each element of $\Hom{(SKK_n,G)}$ with an element of $\Hom{(SKK_n,\bk^\x)}$, which can then be potentially identified with an invertible $n$-TQFT.

We will be interested in finding the kernel and image of $\Psi_n$. The kernel is easy to describe.

\begin{theorem} Let $\mathscr{T}^n_\bk$ be the  subgroup  of $\p{nTQFT}^\x_{\bk}$ that consists of all $\mT \in  \p{nTQFT}^\x_{\bk}$  such that $\mT(M)= id_{\bk}$ for all $n$-cobordisms $M$ with empty boundary. Then $\mathscr{T}^n_\bk= \ker(\Psi_n)$.
\end{theorem}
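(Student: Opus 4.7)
The plan is to prove the equality by unwinding definitions on both sides until they reduce to the same pointwise condition on scalars. First I would observe that for any $\mT \in \p{nTQFT}^\x_{\bk}$ and any closed $n$-manifold $M$ viewed as a cobordism $\emp \rsa \emp$, the linear map $\mT(M)$ is an endomorphism of $\mT(\emp)=\bk$ (by axiom (5) of Definition \ref{dTQFT}), and hence is multiplication by the scalar $\delta(\mT(M)) \in \bk^\x$. Consequently, $\mT(M)=id_\bk$ if and only if $\delta(\mT(M))=1$. This reformulates the defining condition of $\mathscr{T}^n_\bk$ as: $\delta(\mT(M))=1$ for every closed oriented $n$-manifold $M$.

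Next I would translate the kernel condition in the same language. By definition $\Psi_n(\mT) \in \Hom(SKK_n,\bk^\x)$ is the homomorphism determined by $[M]_{SKK} \mapsto \delta(\mT(M))$, and the identity element of $\Hom(SKK_n,\bk^\x)$ is the constant homomorphism with value $1 \in \bk^\x$. Since $SKK_n$ is a quotient of the Grothendieck group $\mathcal{G}(\mM_n)$, it is generated as an abelian group by the classes $[M]_{SKK}$ with $M \in \mM_n$, so $\Psi_n(\mT)$ is trivial iff $\Psi_n(\mT)([M]_{SKK}) = 1$ for every closed oriented $n$-manifold $M$. That is exactly the condition $\delta(\mT(M)) = 1$ for all such $M$, matching the condition obtained in the previous paragraph. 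Comparing the two equivalences yields $\mathscr{T}^n_\bk = \ker(\Psi_n)$.

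There is no serious obstacle here: the proof is essentially a definition-chase, and the entire content is that $\mT(M)=id_\bk$, $\delta(\mT(M))=1$, and $\Psi_n(\mT)([M]_{SKK})=1$ are three spellings of the same condition. A small piece of bookkeeping worth recording along the way is that $\mathscr{T}^n_\bk$ really is a subgroup of $\p{nTQFT}^\x_{\bk}$: if $\mT_1,\mT_2 \in \mathscr{T}^n_\bk$, then $(\mT_1 \otimes \mT_2)(M) = \mT_1(M) \otimes \mT_2(M) = id_\bk \otimes id_\bk = id_\bk$ under the canonical identification $\bk \otimes_\bk \bk \cong \bk$, and the inverse TQFT clearly also sends closed $n$-manifolds to $id_\bk$. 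The only conceptual point to keep in mind is that the universal property of the Grothendieck group, together with the quotient defining $SKK_n$, ensures that a homomorphism out of $SKK_n$ is trivial iff it vanishes on the generators $[M]_{SKK}$, which is what allows the argument to be a pure definition-chase.
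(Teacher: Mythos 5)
Your proposal is correct and follows essentially the same route as the paper's own proof, which is likewise a short definition-chase observing that $\mT(M)=id_\bk$ for all closed $n$-manifolds is the same condition as $\Psi_n(\mT)$ being the trivial element of $\Hom(SKK_n,\bk^\x)$, since the classes $[M]_{SKK}$ generate $SKK_n$. Your extra remarks (the identification via $\delta$ and the check that $\mathscr{T}^n_\bk$ is a subgroup) only make explicit what the paper leaves implicit.
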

\begin{proof} Let $\mT$ be an  invertible $n$-TQFT, and suppose $\Psi_n(\mT)$ is the trivial $SKK_n$-invariant, i.e. the invariant that sends all closed $n$-manifolds to $1\in \bk^\x$. Then obviously $\mT \in \mathscr{T}^n_\bk$. It is also clear that $\mT \in \mathscr{T}^n_\bk \implies \mT \in \ker{\Psi_n}$.
\end{proof} The proposition above expresses a degree of ``forgetfulness" of $\Psi_n$. TQFTs assign values to all cobordisms, with or without boundary, and $SKK$ invariants are only defined on closed manifolds. Thus by applying $\Psi_n$ to $\mT$, one loses some information about $\mT$, as we will show in Theorem \ref{nontriv}.

%The following theorem further illuminates the group $\mathscr{T}_n$. 
\medskip
We start by proving the following Lemma.

\blem \label{lem1} If $\mT \in \mathscr{T}^n_\bk$, then for any cobordism $M$, $\mT(M)$ only depends on the in-boundary and out-boundary of $M$.
\elem

\bpf Let $M: \sg_0 \rsa \sg_1$  and $N: \sg_0 \rsa \sg_1$ be cobordisms. Consider the cobordism $\cl{M}: \cl{\sg}_0 \rsa \cl{\sg}_1 $. Also let $$B_{\sg_0}: \emp \rsa \sg_0 \coprod \cl{\sg}_0 $$ and $$B_{\sg_1} : \sg_1 \coprod \cl{\sg}_1 \rsa \emp $$ be the cobordisms with cylinder $n$-manifolds and identity boundary diffeomorphisms. We have $$\delta \inv(1) = \mT(B_{\sg_0}(M \coprod \cl{M}) B_{\sg_1})= \mT(B_{\sg_0})\mT(M ) \mT(\cl{M}) \mT(B_{\sg_1}) $$ so $\mT(M)= (\mT(B_{\sg_0}) \mT(\cl{M}) \mT(B_{\sg_1})) \inv$. The same reasoning shows that $\mT(N)= (\mT(B_{\sg_0}) \mT(\cl{M}) \mT(B_{\sg_1})) \inv$.
\epf

\medskip

\begin{theorem} \label{nontriv} $\mathscr{T}^n_\bk$ is trivial if and only if $\bk^\x$ is trivial. \end{theorem}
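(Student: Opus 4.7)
The statement factors as two implications, which I would handle separately. The direction ``$\bk^\x$ trivial $\Rightarrow \mathscr{T}^n_\bk$ trivial'' is immediate from the observations at the end of Section \ref{TQFT}: an invertible TQFT assigns multiplication by a nonzero scalar in $\bk$ to every cobordism, and when $\bk^\x = \{1\}$ every such scalar must equal $1$. Thus only the trivial invertible TQFT exists, and in particular $\mathscr{T}^n_\bk$ is trivial.

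For the reverse direction I would prove the contrapositive by exhibiting a nontrivial $\mT \in \mathscr{T}^n_\bk$ whenever $\bk^\x$ is nontrivial. Fix $c \in \bk^\x$ with $c \neq 1$, and let $b$ be a monoid homomorphism from the disjoint-union monoid of diffeomorphism classes of closed oriented $(n-1)$-manifolds to $\bZ$ --- concretely, $b(\sg) = \#\pi_0(\sg)$. Then declare $\mT(\sg) = \bk$ for every $(n-1)$-manifold and, for every cobordism $M : \sg_0 \rsa \sg_1$,
\begin{equation}
\mT(M) = c^{b(\sg_1) - b(\sg_0)} \cdot \mathrm{id}_\bk.
\end{equation}

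Next I would verify the TQFT axioms. Equivalent cobordisms share in- and out-boundaries and thus the same image. For composition the exponents telescope, $b(\sg_1) - b(\sg_0) + b(\sg_2) - b(\sg_1) = b(\sg_2) - b(\sg_0)$, so $\mT$ is compatible with gluing. Cylinders yield exponent zero, producing the identity map. Additivity of $b$ under $\coprod$ passes disjoint union to tensor product, and $\mT(\emp) = \bk$. The TQFT is invertible with inverse obtained by replacing $c$ by $c^{-1}$. Membership in $\mathscr{T}^n_\bk$ is automatic: any closed $n$-manifold, viewed as a cobordism $\emp \rsa \emp$, receives exponent zero, hence the identity. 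For nontriviality, in dimension $n \geq 2$ the disk $D^n : \emp \rsa S^{n-1}$ acquires exponent $1$, so $\mT(D^n)$ is multiplication by $c \neq 1$; in dimension $n = 1$ one replaces $\#\pi_0$ by a sign-weighted count of oriented points, which is still additive under $\coprod$ and yields a cobordism with nonzero exponent.

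The main obstacle is designing an additive boundary invariant $b$ whose difference across some cobordism is nonzero while always vanishing on closed cobordisms. The telescoping ``potential'' expression $b(\sg_1) - b(\sg_0)$ achieves exactly this, because a closed $n$-manifold viewed as a cobordism has $\sg_0 = \sg_1 = \emp$. The only subtle case is $n = 1$, where a parity obstruction rules out $\#\pi_0$ alone and a signed count of oriented points is needed instead.
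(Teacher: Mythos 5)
Your argument is essentially the paper's: the paper also sets $\mT(\sg)=\bk$ for all $\sg$, picks a scalar $\lambda_\sg\in\bk^\x\setminus\{1\}$ for each nonempty closed connected $(n-1)$-manifold (with $\lambda_\emp=1$), and defines $\mT(M)$ as $\delta\inv$ of the product of the in-boundary $\lambda$'s times the inverses of the out-boundary $\lambda$'s; your $\mT(M)=c^{\,b(\sg_1)-b(\sg_0)}\cdot\mathrm{id}_\bk$ with $b=\#\pi_0$ is the special case in which all $\lambda_\sg$ are equal, and your verification of the axioms (dependence only on the boundary, telescoping, additivity of $b$ under $\coprod$) is the same computation. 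The forward direction also matches the paper. Your nontriviality witness $D^n:\emp\rsa S^{n-1}$ for $n\ge 2$ is fine.

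The genuine gap is your $n=1$ patch. The signed count of the oriented boundary of a compact oriented $1$-manifold is zero (each interval component contributes one positively and one negatively oriented endpoint), and with the paper's convention $\pl M\cong\overline{\sg}_0\coprod\sg_1$ this forces $b(\sg_1)-b(\sg_0)=0$ for \emph{every} $1$-cobordism when $b$ is the sign-weighted count. So your modified invariant yields exponent $0$ everywhere, i.e.\ the trivial TQFT, and the claim that it ``yields a cobordism with nonzero exponent'' is false. Note also that the parity obstruction you are guarding against only bites when every $c\in\bk^\x$ satisfies $c^2=1$, i.e.\ when $\bk=\mathbb{F}_3$: otherwise $\#\pi_0$ already works in dimension $1$, since the interval $\emp\rsa S^0$ has exponent $2$. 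A fix valid for all $\bk$ is to take $b(\sg)$ to be the number of \emph{positively} oriented points of $\sg$: this is additive under disjoint union, gives exponent $1$ on $\emp\rsa S^0$, and still gives exponent $0$ on closed $1$-manifolds, so the resulting TQFT lies in $\mathscr{T}^1_\bk$ and is nontrivial. (Your instinct that dimension $1$ needs separate care is actually sharper than the paper, whose construction does not single it out and, over $\mathbb{F}_3$, runs into the same parity issue because all $\lambda$'s are forced to equal $-1$; the fix above repairs both.)
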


 %\noindent Note that $\bk^\x$ is trivial if and only if $\bk \cong \bZ_2$.

\bpf First suppose $\bk^\x$ is trivial. Then there is obviously only one possible invertible TQFT, which is the trivial TQFT. In this case $\mathscr{T}^n_\bk$ is clearly trivial. 

Now suppose $\bk^\x$ is nontrivial. We must define $\mT \in  \p{nTQFT}^\x_{\bk}$ so that $\mT$ evaluates closed manifolds trivially and manifolds with boundary nontrivially. We proceed as follows. 

Let $\mT(\sg)=\bk$ for all $(n-1)$-manifolds $\sg$. Then assign to each closed connected $(n-1)$-manifold $\sg$ a scalar $\lambda_{\sg}$. We require $\lambda_\sg \notin \{0,1\}$ if  $\sg \neq \emp$, and $\lambda_\emp=1$. Now for any cobordism \beq M:\coprod \limits _{i=0} ^n \sg_i \rsa \coprod \limits _{j=0} ^m \sg_j \eeq we define \beq \label{12} \mT(M)=\delta \inv\Big(\prod \limits _{i=0} ^n \lambda_{\sg_i} \cdot \prod \limits _{j=0} ^m\lambda_{\sg_j}\inv\Big). \eeq 

Now we check the axioms of Definition \ref{dTQFT} in order. The value $\mT$ assigns to an $n$-cobordism depends only on its in-boundary and out-boundary. Equivalent cobordisms must have the same in-boundary and out-boundary, so (1) holds. 

Now consider axiom (2), and let  \beq N:\coprod\limits _{j=0} ^m \sg_j   \rsa \coprod \limits _{k=0} ^l \sg_k\eeq be another cobordism. We have 
\beq \begin{split} \mT(MN) &= \delta \inv\Big(\prod \limits _{i=0} ^n \lambda_{\sg_i} \cdot \prod \limits _{k=0} ^m\lambda_{\sg_k}\inv\Big) \\
  &= \delta \inv\Big(\prod \limits _{i=0} ^n \lambda_{\sg_i} \cdot \prod \limits _{j=0} ^m \lambda_{\sg_j}\inv \cdot \prod \limits _{j=0} ^m\lambda_{\sg_j} \cdot \prod \limits _{k=0} ^l\lambda_{\sg_k}\inv\Big) \\
  &= \delta \inv\Big(\prod \limits _{i=0} ^n \lambda_{\sg_i} \cdot \prod \limits _{j=0} ^m \lambda_{\sg_j}\inv\Big) \circ \delta \inv \Big( \cdot \prod \limits _{j=0} ^m\lambda_{\sg_j} \cdot \prod \limits _{k=0} ^l\lambda_{\sg_k}\inv\Big) \\
  &= \mT(M) \circ \mT(N)
\end{split} \eeq and (2) is satisfied. A connected cylinder cobordism of $\sg$ is a cobordism $\sg \rsa \sg$, so we have \beq \begin{split} \mT(\sg \x I) & =\delta \inv(\lambda_{\sg} \cdot \lambda_{\sg} \inv) \\
  & =\delta \inv(1) \\
  & = id_{\bk^\x} \end{split} \eeq so (3) is satisfied. The disconnected case follows from (4), which clearly follows from Equation \eqref{12}. (5) is satisfied trivially.
  
Thus $\mT$ is an invertible $n$-TQFT, and for all closed manifolds $M$, \beq \begin{split} \mT(M) & =\delta\inv(\lambda_\emp \circ \lambda_\emp \inv) \\
  & =\delta \inv(1) \\
  & =id_{\bk ^\x}. \end{split} \eeq We chose $\mT$ to take on nontrivial values, so $\mT$ is not the trivial TQFT.
\epf 
We now want to try to figure out the image of $\Psi_n$. One problem, of course, comes from the ``forgetfullness" of $\Psi_n$. Given an $SKK$ invariant $\xi$, we would like to choose an invertible $n$-TQFT $\mT$ such that $\Psi_n(\mT)= \xi$. But $\xi$ gives us no explicit information as to how $\mT$ should evaluate cobordisms with nonempty boundary.

Some of the $SKK$ invariants that we've listed, however, do give us information about how to evaluate such cobordisms. We cannot with perfect accuracy say that any of these invariants define TQFTs, since these invariants (composed with exponential functions, if necesarry) assign nonzero scalars to manifolds rather than invertible linear maps $\bk \to \bk$. This difference is, however, superficial, and we therefore choose to ignore it. Specifically, if $\label{003}\mM^{\pl}_n$ denotes the set of diffeomorphism classes of compact oriented $n$-manifolds with boundary, then we will say that a function  $\Theta : \mM^{\pl}_n \to \bk^\x$ defines an invertible $n$-TQFT if the formula \beq \mT(M)=\delta \inv (\Theta(M)) \eeq defines an invertible $n$-TQFT. The following proposition shows us exactly when $\Theta$ has this property.

\begin{theorem} \label{first} The function $\Theta : \mM^{\pl}_n \to \bk^\x$ defines an invertible $n$-TQFT if and only if \beq  \Theta([M \underset{f}{\cup}N])=\label{above} \Theta([M]) \cdot \Theta([N]) \eeq for all $[M], [N] \in \mM^{\pl}_n$. Here $f$ is any orientation-preserving diffeomorphism $\pl_{\textnormal{out}} M \to \overline{\pl_{\textnormal{in}} N}$, where $\pl_{\textnormal{out}}M$ and $\pl_{\textnormal{in}}N$ are unions of  boundary components of $M$ and $N$, respectively.
\end{theorem}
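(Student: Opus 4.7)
The strategy is to verify the two implications of the biconditional directly. For the forward direction, I assume $\Theta$ defines the invertible TQFT $\mT$ via $\mT(M) = \delta\inv(\Theta([M]))$ and deduce the multiplicativity from axiom (2) of Definition \ref{dTQFT}. For the backward direction, I assume multiplicativity and construct such a $\mT$, verifying each of the five TQFT axioms in turn.

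For the forward direction, given any compact oriented $n$-manifolds $M, N$ with a gluing $f: \pl_{\mrm{out}} M \to \cl{\pl_{\mrm{in}} N}$, I endow $M$ with the cobordism structure having $\pl_{\mrm{out}} M$ as its out-boundary (and the other boundary components as in-boundary), and analogously for $N$, choosing the boundary diffeomorphisms so that the induced gluing $\phi'_{\mrm{in}} \circ \phi_{\mrm{out}}\inv$ equals $f$. Then $M \cup_f N$ is exactly the cobordism composition $MN$, and axiom (2) yields $\mT(M \cup_f N) = \mT(M) \circ \mT(N)$. Since $\mT$ is invertible, all vector spaces involved are canonically identified with $\bk$ and composition of scalar-multiplication maps corresponds to multiplication of scalars under $\delta$, giving exactly $\Theta([M \cup_f N]) = \Theta([M]) \cdot \Theta([N])$.

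For the backward direction, I define $\mT(\sg) = \bk$ on each closed connected $(n-1)$-manifold (extended to disjoint unions via tensor products) and $\mT(M) = \delta\inv(\Theta([M]))$ on each cobordism. Axioms (1), (4), and (5) are then straightforward: (1) holds because $\Theta$ is a diffeomorphism invariant and equivalent cobordisms are diffeomorphic as manifolds with boundary; (4) on objects is built into the definition and on morphisms follows from multiplicativity with the empty gluing diffeomorphism (so that $M \cup_f N = M \amalg N$); and (5) is immediate. Axiom (2) is the multiplicativity hypothesis itself. The main technical point is axiom (3), which demands $\mT(\sg \x I) = \mrm{id}_{\bk}$, that is, $\Theta([\sg \x I]) = 1$. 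To verify this, I observe that gluing $\sg \x I$ to itself along one boundary copy of $\sg$ yields a manifold diffeomorphic to $\sg \x I$; applying multiplicativity then gives $\Theta([\sg \x I])^2 = \Theta([\sg \x I])$ in $\bk^\x$, forcing the value to be $1$. Conceptually, the strength of the hypothesis resides in its holding for \emph{every} $f$: since the right-hand side is $f$-independent, the condition implicitly encodes that $\Theta([M \cup_f N])$ must not depend on the choice of gluing, which is consistent with invertible TQFTs not distinguishing such gluings.
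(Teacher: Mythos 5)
Your proposal is correct and follows essentially the same route as the paper: the backward direction checks the five axioms with the same key step for axiom (3) (a cylinder glued to a cylinder is again a cylinder, forcing $\Theta([\sg\times I])^2=\Theta([\sg\times I])$ and hence the value $1$ in $\bk^\x$) and the same use of the empty gluing for axiom (4), while the forward direction equips $M$ and $N$ with cobordism structures whose induced gluing is $f$ and invokes axiom (2), just as the paper does. Your writeup is if anything slightly more explicit about how the boundary diffeomorphisms are chosen so that the composite cobordism realizes $M\underset{f}{\cup}N$, but there is no substantive difference.
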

\begin{proof} 

First suppose that Equation \eqref{above} holds. We check the TQFT axioms of Definition \ref{dTQFT}.

(1) is clearly satisfied, since for any equivalent $n$-cobordisms $M$ and $N$, there is an orientation-preserving diffeomorphism $\psi: M \to N$. $\Theta$ must evaluate such cobordisms equally. (2) is satisfied by Equation \eqref{above}. To show (3), note that \beq \Theta([\sg \times I])= \Theta([\sg \times I]) \cdot \Theta([\sg \times I]) \eeq for all closed $(n-1)$-manifolds $\sg$. This true because the two identical cylinder cobordisms can be glued to produce another cylinder cobordism. Thus we have $\Theta([\sg \times I])=1$. (4) is also satisfied by Equation \eqref{above}, where $f$ is an empty map. (5) is satisfied trivially.

Now suppose $\Theta$ defines an invertible $n$-TQFT $\mT$. Then $\mT$ must evaluate $n$-cobordisms based only on their oriented diffeomorphism class. Now let $f:\pl_{\textnormal{out}}M \to \overline{\pl_{\textnormal{in}}N}$ be an orientation-preserving diffeomorphism. We can easily form cobordisms $M: \overline{\pl_{\textnormal{in}}M} \rsa \pl_{\textnormal{out}}M$ and $N: \pl_{\textnormal{out}}M \rsa \pl_{\textnormal{out}}N$, where the in-boundary diffeomorphism of $N$ is $f$ and all other diffeomorphisms are the identity. The resulting glued cobordism has \beq M \underset{f}{\cup}N \eeq as its $n$-manifold. Thus by axiom (2) of Definition \ref{dTQFT}, Equation \eqref{above}  holds. \end{proof}

\begin{corollary}\label{second} If an oriented $n$-diffeomorphism invariant $\Theta$ on manifolds with boundary defines an invertible $n$-TQFT, then it restricts to a linear combination of the Euler characteristic and signature on closed manifolds.
\end{corollary}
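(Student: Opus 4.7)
The strategy is to combine Theorem \ref{first} with the classification Theorem \ref{phew} of $SK$ invariants. First I would observe that by Theorem \ref{first}, the hypothesis that $\Theta$ defines an invertible $n$-TQFT is equivalent to the multiplicativity property
\[
\Theta\bigl([M \cup_f N]\bigr) = \Theta([M]) \cdot \Theta([N])
\]
for every admissible boundary-gluing $f$. The key observation is that the right-hand side is independent of the choice of $f$. Hence, for any closed manifold $M$ presented as $M = X_1 \cup_f X_2$, the value $\Theta(M) = \Theta(X_1)\Theta(X_2)$ depends only on the pieces $X_1, X_2$ and not on the gluing diffeomorphism. Therefore the restriction of $\Theta$ to closed manifolds takes equal values on cut-and-paste equivalent manifolds.

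Next I would check that this restriction is a homomorphism out of the $SK$ group: disjoint union is a special case of gluing along the empty boundary, so $\Theta(M \coprod N) = \Theta(M) \Theta(N)$, and combined with $SK$-invariance we obtain a group homomorphism
\[
\Theta\colon SK_n \longrightarrow \bk^{\times}.
\]
At this point I would invoke Theorem \ref{phew}. For $n$ odd we have $SK_n = 0$ so $\Theta \equiv 1$ and the conclusion is vacuous. For $n \equiv 2 \pmod 4$ the group $SK_n \cong \bZ$ is generated by the class detected by $\chi/2$, so there exists $\beta \in \bk^{\times}$ with $\Theta(M) = \beta^{\chi(M)/2}$. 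For $n \equiv 0 \pmod 4$ the group $SK_n \cong \bZ \oplus \bZ$ is detected by the pair $((\chi-\sigma)/2,\sigma)$, so there exist $\alpha,\beta \in \bk^{\times}$ with $\Theta(M) = \alpha^{(\chi(M)-\sigma(M))/2}\beta^{\sigma(M)}$. In each case $\Theta$ is a multiplicative expression in $\chi$ and $\sigma$, i.e.\ (after passing through the exponential identification used throughout Section 4 between additive $SKK$-invariants and elements of $\Hom(SKK_n,\bk^{\times})$) a linear combination of $\chi$ and $\sigma$.

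The only real content beyond bookkeeping is the first step, and it is essentially trivial once Theorem \ref{first} is in hand: the multiplicativity of $\Theta$ under arbitrary gluings collapses the $SKK$ relation to the $SK$ relation, at which point Theorem \ref{phew} does all of the work. I do not anticipate a genuine obstacle; the only mild subtlety is the cosmetic one of translating between the additive language of Theorem \ref{phew} and the multiplicative target $\bk^{\times}$, which is precisely the $\exp$-identification already discussed in the paragraph following the definition of $\Psi_n$.
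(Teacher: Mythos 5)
Your proposal is correct and follows essentially the same route as the paper: the arbitrariness of the gluing diffeomorphism $f$ in Theorem \ref{first} forces $\Theta$ to be an $SK$ invariant on closed manifolds, and Theorem \ref{phew} then identifies it (multiplicatively, via the $\exp$-identification) with a linear combination of $\chi$ and $\sigma$. The paper states this in two sentences; your write-up simply makes the intermediate bookkeeping explicit.
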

\bpf Because our choice of $f$ was arbitrary in Theorem \ref{first}, $\Theta$ restricts to an $SK$ invariant on closed manifolds. The result follows from Theorem \ref{phew}.
\epf

\begin{corollary} The Euler characteristic $\chi$ and semicharacteristic $\chi_{1/2}$ define invertible $n$-TQFTs if and only if $n$ is even. The signature $\sigma$ defines an $n$-TQFT. 
\end{corollary}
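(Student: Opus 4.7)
The plan is to apply Theorem~\ref{first}: a function $\Theta : \mM^\pl_n \to \bk^\x$ defines an invertible $n$-TQFT precisely when it is multiplicative under boundary gluings. Since $\chi$, $\chi_{1/2}$, and $\sigma$ take values in an additive abelian group, I pass to the associated multiplicative functions $a^\chi$, $a^{\chi_{1/2}}$, $a^\sigma$ valued in $\bk^\x$ (composing with an inclusion such as $\bZ \hookrightarrow \bR^\x$, $k \mapsto a^k$, where $a \ne 0, 1$). Multiplicativity of these exponentials is equivalent to additivity of the underlying invariant under a gluing $M \cup_\Sigma N$ along a closed $(n-1)$-manifold $\Sigma$.

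For $n$ even, inclusion--exclusion gives $\chi(M \cup_\Sigma N) = \chi(M) + \chi(N) - \chi(\Sigma)$, and Poincar\'e duality for the closed odd-dimensional manifold $\Sigma$ forces $\chi(\Sigma) = 0$. Hence $\chi$ is additive under gluing, $a^\chi$ is multiplicative, and likewise $a^{\chi_{1/2}}$ since $\chi_{1/2} = \chi/2$ in even dimensions. For the signature in dimensions $n = 4k$, Novikov's additivity theorem $\sigma(M \cup_\Sigma N) = \sigma(M) + \sigma(N)$ applies (the gluing hypersurface $\Sigma$ is closed, so no Wall non-additivity correction appears), so $a^\sigma$ is multiplicative; in dimensions not divisible by $4$ the signature vanishes identically and the trivial TQFT serves.

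For $n$ odd, the Euler characteristic fails by a direct counterexample: gluing two copies of $D^n$ along the identity of $S^{n-1}$ produces $S^n$, and $\chi(D^n) + \chi(D^n) = 2$ while $\chi(S^n) = 0$. For the semicharacteristic in odd $n$ I argue indirectly. If $\chi_{1/2}$ defined an invertible $n$-TQFT, then by Corollary~\ref{second} its restriction to closed $n$-manifolds would be a linear combination of $\chi$ and $\sigma$; but both vanish on closed odd-dimensional manifolds ($\chi$ by Poincar\'e duality, $\sigma$ because the signature is trivial outside dimensions $4k$), so the restriction would have to be identically trivial. This contradicts the direct computation $\chi_{1/2}(S^n) \equiv 1 \pmod 2$ for $n$ odd, which follows from the fact that $H_0(S^n) \cong \bZ$ is the only nonvanishing even-index homology group of $S^n$.

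The main obstacle I foresee is the semicharacteristic case in odd dimensions, where no transparent direct counterexample to the gluing law is available: the indirect argument through Corollary~\ref{second} is essential and must be phrased uniformly for all odd $n$, including $n \equiv 3 \pmod 4$ where $\chi_{1/2}$ is not an SKK invariant but is still a well-defined numerical function on $\mM^\pl_n$. The other technical point worth checking is that Novikov additivity, usually stated for closed manifolds decomposed along a separating hypersurface, suffices in the generality of Theorem~\ref{first} (gluings along unions of boundary components); this is standard and presents no real difficulty since $\Sigma$ remains closed in every such gluing.
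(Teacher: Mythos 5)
Your proposal is correct and follows essentially the same route as the paper: Theorem \ref{first} plus the union formula and vanishing of $\chi$ on closed odd-dimensional manifolds for even $n$, the two-disks-glued-to-a-sphere counterexample for odd $n$, Corollary \ref{second} for $\chi_{1/2}$ in odd dimensions, and Novikov additivity for $\sigma$. You merely spell out details the paper leaves implicit (the vanishing of the restriction in odd dimensions versus $\chi_{1/2}(S^n)=1$, and the trivial case of $\sigma$ outside dimensions divisible by $4$), which is fine.
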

\bpf For $n$ even, $\chi$ and $\chi_{1/2}$ satisfy Equation \eqref{above} by the union formula for the Euler characteristic and the fact that the Euler characteristic of any closed odd-dimensional manifold is zero. The result then follows from Theorem \ref{first}. 

Now let $n$ be odd. The $n$-disk has Euler characteristic 1. We can glue two $n$-disks via the identity of the boundary to form an $n$-sphere, which has Euler characteristic 0, contradicting Equation \eqref{above}. Thus the Euler characteristic cannot does not define an invertible $n$-TQFT for $n$ odd.

That  $\chi_{1/2}$ cannot does not define an $n$-TQFT for $n$ odd follows from Corollary \ref{second}. 

The signature $\sigma$ satisfies Equation \eqref{above} by Novikov additivity. 
\epf

Note that the above corollary is not a proof that $\Psi_n$ is not surjective. We have only given conditions for an \tit{oriented diffeomorphism invariant} on manifolds with boundary to define a TQFT. In general, an $n$-TQFT can pick up more information than just the oriented diffeomorphism class of the $n$-manifold; in particular, it notices the choice of boundary manifolds and diffeomorphisms. These choices completely determine how cobordisms are glued. This is why $\Theta$ does not notice how manifolds are glued.

\medskip

We would like to describe\beq \Psi_n:\p{nTQFT} ^\x _\bk \to \Hom{(SKK_n, \bk^\x)}\eeq by giving an  exact sequence that includes these terms and the kernel and cokernel of $\Psi_n$. This is difficult to do in general, but under a few assumptions it is feasible. In particular, we shall see that if the target group of the $SKK$ invariants is the multiplicative group of positive reals, then we can describe the sequence. Before we go into the description of this exact sequence we give a preliminary definition and lemma.

\begin{definition} \label{07} Let $M$ be a compact oriented manifold with boundary. We define the \tit{double} of $M$ to be the closed manifold  \beq D(M)= M \underset{id_{\pl M}}{\cup} \overline{M}. \eeq 
\end{definition}

%\noindent \textcolor{red}{It should be clear that \beq D(M) = \pl(M \x I). \eeq}
%\textcolor{red}{The boundary $\pl(M \x I) = M \sqcup \overline{M}$, but this is a disjoint union and $D(M)$ is already glued. I don't think you need this equality anyway... you can just delete it. Actually, $\pl(M \x I)=M \sqcup \overline{M} \sqcup \pl M \x I$. For example, if $M$ is a closed disk, $M \x I$ is a circular cylinder, with boundary homeomorphic to a sphere, which is the double of $M$. } See \ref{008} for a point in my argument in which this statement is important

The following lemma gives a relation in $SKK_n$ that will be necessary for the description of our exact sequence.

\begin{lemma} \label{LEM} Let $X_1$, $X_2$, and $X_3$ be oriented $n$-manifolds with boundaries $\sg_1$, $\sg_2$ and $\sg_3$, respectively. An example is pictured in Figure \ref{22}.

\begin{figure} [h]
   \centering
   \def\svgwidth{\columnwidth}
   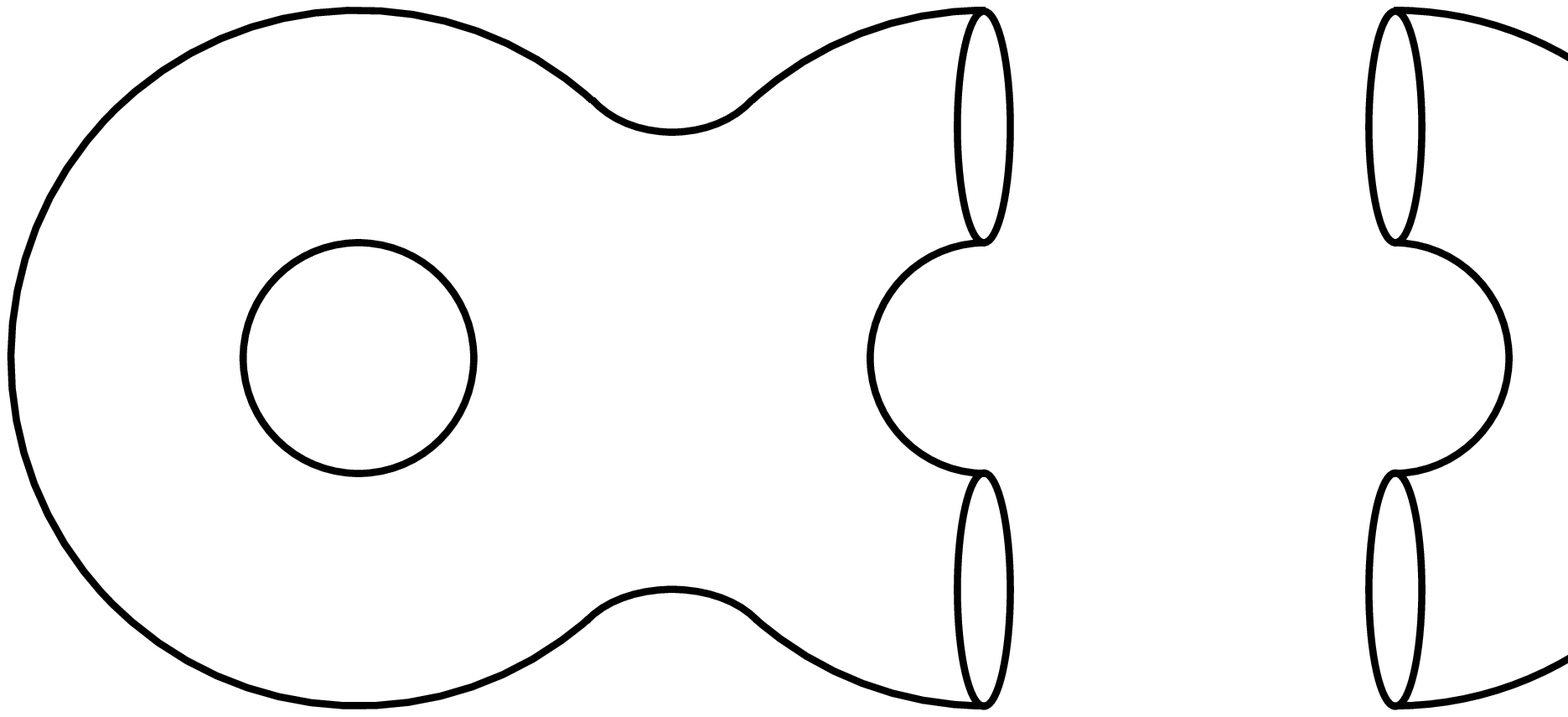
   \caption{$X_1$, $X_2$, and $X_3$}
   \label{22}
\end{figure} 

 Let \beq f: \sg_1 \to {\sg}_2 \ \mathrm{and} \ g: \sg_2 \to \overline{\sg}_3 \eeq be orientation-preserving diffeomorphisms. Then in $SKK_n$, we have \beq [X_1 \underset{f}{\cup} \overline{X}_2] + [{X}_2 \underset{g}{\cup} {X_3}]=[X_1 \underset{g \circ f}{\cup} {X_3}] + [D(X_2)]. \eeq The desired relation is pictured in Figure \ref{24}.

 \begin{figure} [h]
   \centering
   \def\svgwidth{\columnwidth}
   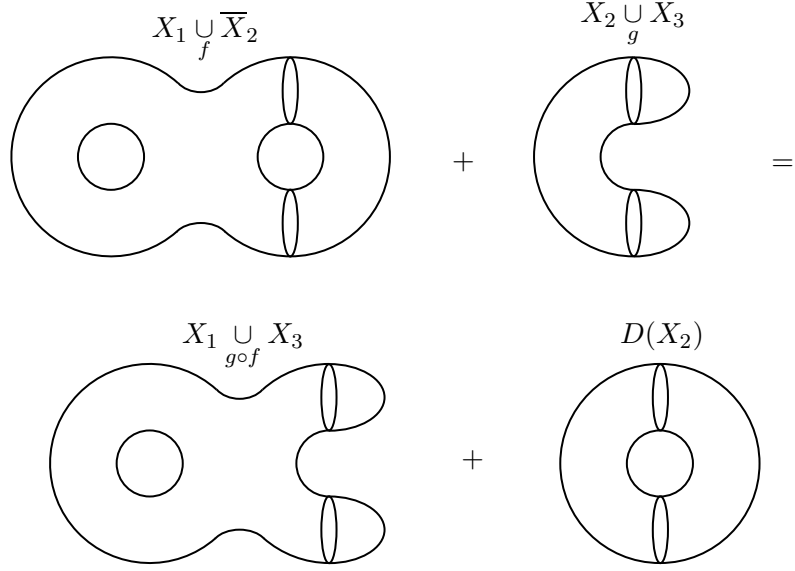
   \caption{Our desired relation in $SKK_n$}
   \label{24}
\end{figure}
 \end{lemma}

\bpf Recall that  in $SKK_n$, whenever $M$ and $N$ are pasted, respectively, via the same diffeomorphisms as $M'$ and $N'$, \beq [M] -[N] = [M'] -[N']. \label{16} \eeq We will make use of this equation to prove the lemma. To do this, we must start with two ``cut" manifolds with boundary, glue each in two different ways, and apply \eqref{16}. 
	Our starting manifolds will be \beq X^*_1= X_1 \coprod X_2 \coprod {X}_2' \coprod \overline{X}_3, \eeq which is pictured in Figure \ref{25},
	
	\begin{figure} [h!]
   \centering
   \def\svgwidth{\columnwidth}
   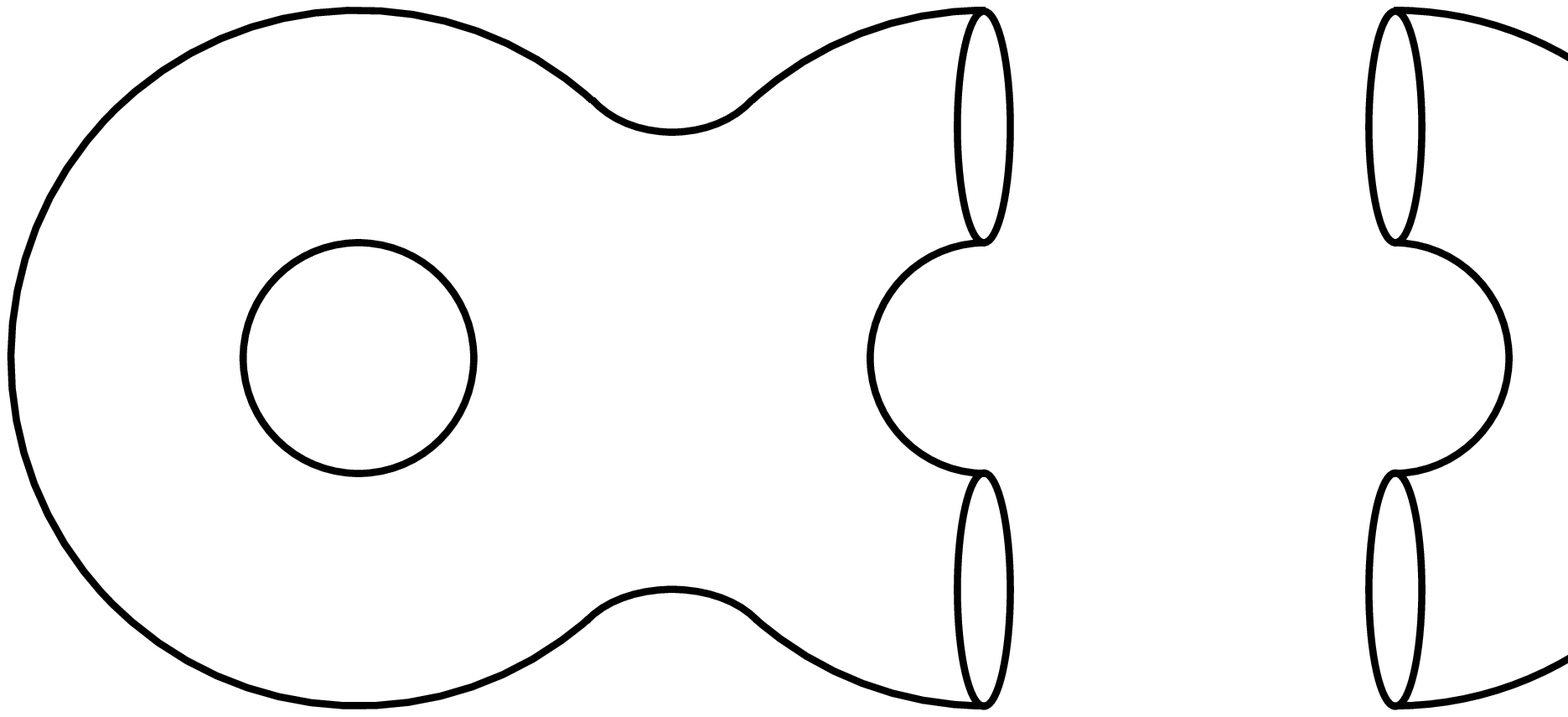
   \caption{$X^*_1$}
   \label{25}
\end{figure}

	\noindent and \beq X^*_2=X_1 \coprod X_2 \coprod {X}_2'\coprod C_{g\inv} \underset{id_{\sg_2}}{\cup}\overline{X}_2, 	\eeq where $C_{g \inv}$ is the mapping cylinder of $g \inv$. This manifold is pictured in Figure \ref{26}. 
	 
	 \begin{figure} [h!]
   \centering
   \def\svgwidth{\columnwidth}
   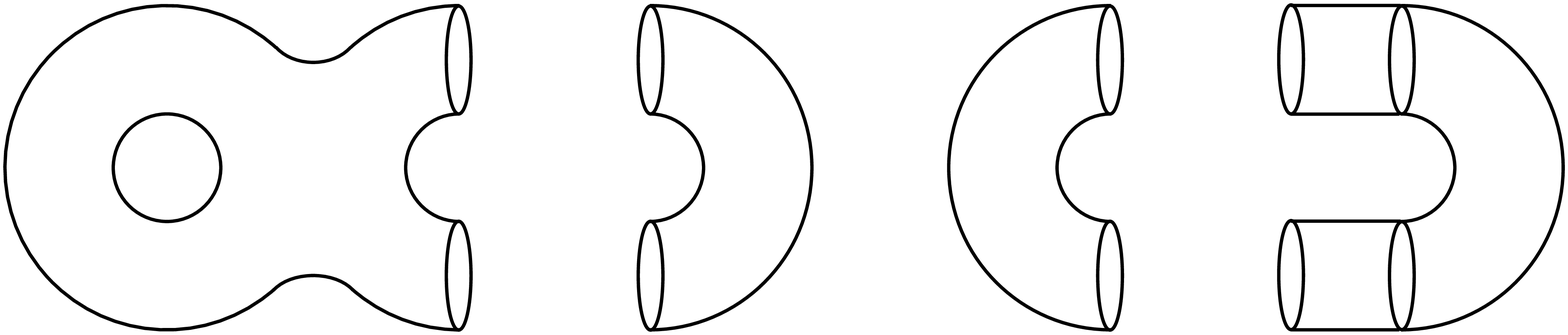
   \caption{$X^*_2$}
   \label{26}
\end{figure}

    	 Both have \beq \sg_1 \coprod \sg_2 \coprod \sg_2' \coprod \overline{\sg}_3 \eeq as their boundaries. The two gluing diffeomorphisms are \beq 	F_1 = \begin{cases} f: &\sg_1 \to \sg_2 \\ g: &  {\sg}_2'\to \overline{\sg}_3 \end{cases}  \ \ \mathrm{and} \ \   F_2 = \begin{cases} g \circ f : & \sg_1 \to \overline{\sg}_3 \\ id:& \sg_2 \to \sg_2' \end{cases} \eeq Gluing 	$X^*_1$ via $F_1$, we obtain \beq [X_1 \underset{f}{\cup} \overline{X}_2] + [X_2 \underset{g}{\cup} {X_3}], \eeq  as in Figure \ref{29}.
	 
	 \begin{figure} [h!]
   \centering
   \def\svgwidth{\columnwidth}
   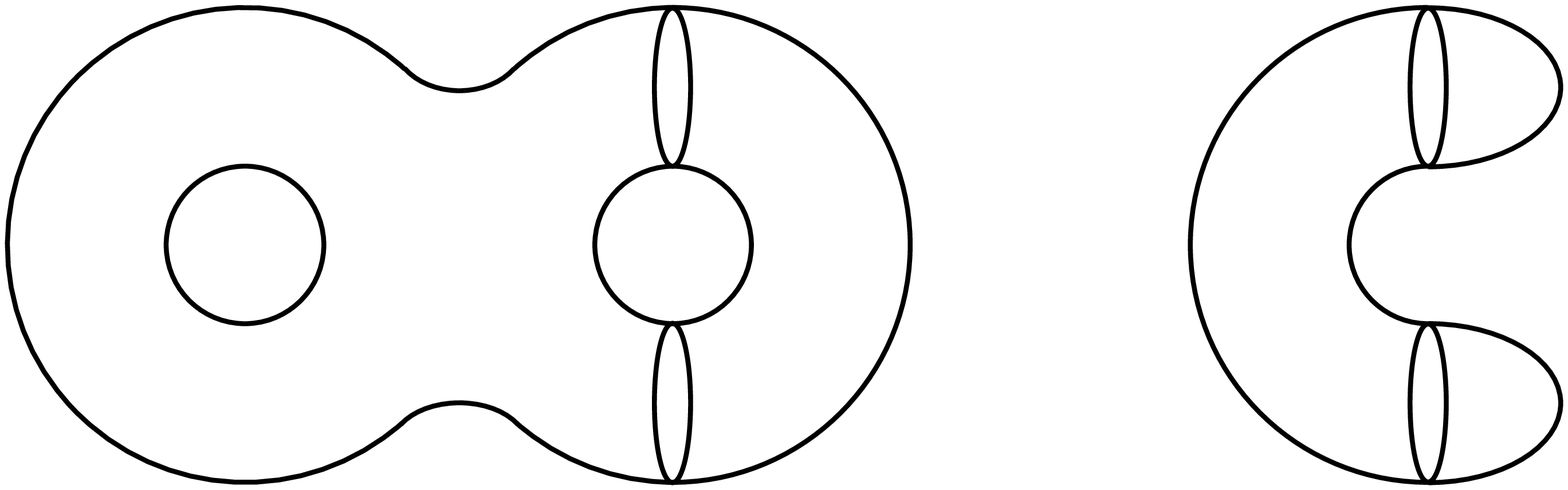
   \caption{$X^*_1$ glued via $F_1$}
   \label{29}
\end{figure} 
	 
	\noindent Gluing $X^*_1$ via $F_2$, we obtain \beq [X_1 \underset{g \circ f}{\cup} {X_3}] + [D(X_2)], \eeq as in Figure \ref{37}.
	
	\begin{figure} [h!]
   \centering
   \def\svgwidth{\columnwidth}
   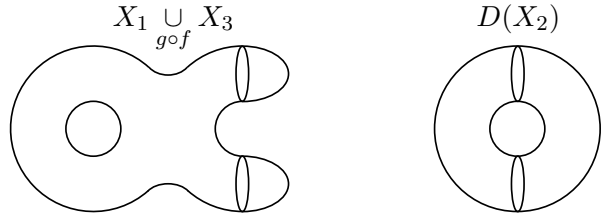
   \caption{$X^*_1$ glued via $F_2$}
   \label{37}
\end{figure} 
	
	\noindent Gluing $X^*_2$ via $F_1$, we obtain \beq [X_1 \underset{f}{\cup} \overline{X}_2] +[D(X_2)], \eeq as in Figure \ref{27}.
	
\begin{figure} [h!]
   \centering
   \def\svgwidth{\columnwidth}
   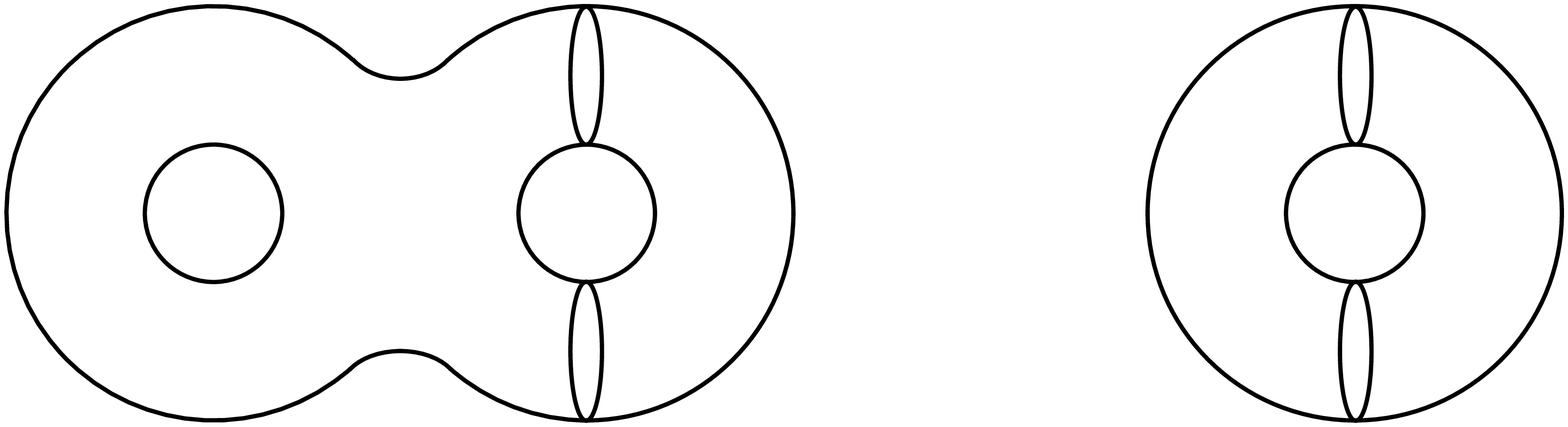
   \caption{$X^*_2$ glued via $F_1$}
   \label{27}
\end{figure} 
	
	\noindent Gluing $X^*_2$ via $F_2$, we obtain \beq [X_1 \underset{f}{\cup} \overline{X}_2] +[D(X_2)], \eeq as in Figure \ref{28}. \begin{figure} [h!]
   \centering
   \def\svgwidth{\columnwidth}
   \input{lemma16.eps_tex}
   \caption{$X^*_2$ glued via $F_2$}
   \label{28}

 \vspace*{1mm}

\end{figure} 

\noindent Now we apply Equation \eqref{16}. We have \beq  \begin{split} [X_1 \underset{f}{\cup} \overline{X}_2] + [X_2 \underset{g}{\cup} {X_3}]-[X_1 \underset{g \circ f}{\cup} {X_3}] -[D(X_2)]& = [X_1 \underset{f}{\cup} \overline{X}_2] +[D(X_2)] \\ 
& \quad -[X_1 \underset{f}{\cup} \overline{X}_2] -[D(X_2)]. \end{split} \nonumber \eeq
Therefore
\beq \begin{split}
 [X_1 \underset{f}{\cup} \overline{X}_2] + [X_2 \underset{g}{\cup} {X_3}]-[X_1 \underset{g \circ f}{\cup} {X_3}] -[D(X_2)]& = 0,  \end{split} \nonumber \eeq
and
\beq \begin{split}
[X_1 \underset{f}{\cup} \overline{X}_2] + [X_2 \underset{g}{\cup} {X_3}]& =[X_1 \underset{g \circ f}{\cup} {X_3}] + [D(X_2)], \end{split} \nonumber \eeq as desired. \end{proof}

We now have everything we need to describe our sequence.

\begin{theorem} Let $\mathscr{T}^n_\bR(1,-1)$ denote the subgroup of  $\p{nTQFT}^\x_{\bR}$ that consists of all invertible $n$-TQFTs that take on the values of $\delta \inv (1)$ and $\delta \inv (-1)$ on closed manifolds. Then the following sequence is split exact: \beq 0\to \mathscr{T}^n_\bR(1,-1) \xrightarrow{i_n} \p{nTQFT}^\x_{\bR}  \xrightarrow{\mid \Psi_n \mid} \Hom{(SKK_n, \bR^\x_+)} \to 0 \eeq Here $i_n$ is the inclusion map, $\bR_+^\x$ is the multiplicative group of positive reals, and $|\Psi_n|$ is the homomorphism \beq | \Psi_n |(\mT) := [M]_{SKK} \to | \delta(\mT([M]_{SKK}))|. \eeq
\end{theorem}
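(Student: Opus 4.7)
The plan is to establish exactness at each of the three spots together with a group-theoretic splitting $s\colon \Hom(SKK_n,\bR^\x_+)\to \p{nTQFT}^\x_\bR$. Injectivity of $i_n$ is immediate since it is an inclusion, and exactness at $\p{nTQFT}^\x_\bR$ follows from the fact that $a\in\bR^\x$ satisfies $|a|=1$ iff $a=\pm 1$: hence $\mT\in\ker|\Psi_n|$ iff $\delta(\mT([M]))\in\{1,-1\}$ for every closed $n$-manifold $M$, which is exactly the defining condition of $\mathscr{T}^n_\bR(1,-1)$.

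For surjectivity of $|\Psi_n|$ and the splitting, I would introduce the candidate section
\[
s(\xi)(X)\;:=\;\delta^{-1}\!\left(\sqrt{\xi([D(X)])}\right)
\]
for $\xi\in\Hom(SKK_n,\bR^\x_+)$ and $X$ any compact oriented $n$-manifold (with or without boundary); the square root is unambiguous because the target is $\bR^\x_+$. By Theorem \ref{first}, the rule $s(\xi)$ defines an invertible TQFT provided it is multiplicative under gluing, and squaring reduces this to the $SKK_n$-identity
\[
[D(M\cup_f N)]=[D(M)]+[D(N)].
\]

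This identity is the central geometric ingredient and is precisely what Lemma \ref{LEM} is tailored to supply. Both sides involve the four pieces $M,\overline M,N,\overline N$; they differ only in whether the common interface is glued \emph{within} each double (yielding $D(M)\sqcup D(N)$) or \emph{across} them via $f$ and $\overline f$ (yielding $D(M\cup_f N)$). This is exactly a cut-and-paste replacement of one gluing diffeomorphism by another, and the resulting $SKK$-correction is a double, absorbed on the right-hand side. Concretely, applying Lemma \ref{LEM} with $X_1=M$, $X_2=\overline N$, $X_3=\overline M$ and suitably chosen diffeomorphisms and then rearranging terms yields the stated identity.

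The main obstacle is verifying $|\Psi_n|\circ s=\mathrm{id}$. On a closed $M$ one computes $|\delta(s(\xi)(M))|=\sqrt{\xi([M])\,\xi([\overline M])}$, which equals $\xi([M])$ precisely when $\xi([\overline M])=\xi([M])$. This symmetry holds for Euler-characteristic-type invariants, but fails for antisymmetric (bordism-type) invariants such as the signature, for which $\xi([\overline M])=\xi([M])^{-1}$ and the double construction collapses to the trivial invariant. To treat these cases I would decompose any $\xi$ canonically as $\xi=\xi_+\cdot\xi_-$ into its symmetric and antisymmetric parts (well defined because $\bR^\x_+$ is uniquely divisible), handle $\xi_+$ with the double construction above, and realise $\xi_-$ by tensoring with the signature TQFT from the corollary above (and, more generally, with a TQFT realising the underlying bordism invariant on $\Omega_n$). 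Verifying that the composite construction is a group homomorphism with $|\Psi_n|\circ s=\mathrm{id}$ then completes the splitting.
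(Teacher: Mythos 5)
Your treatment of exactness at $\p{nTQFT}^\x_{\bR}$ is correct and is the same as the paper's. The problem lies in the splitting, and specifically in how you handle the antisymmetric (bordism) part $\xi_-$. Saying that $\xi_-$ can be realized ``by tensoring with the signature TQFT \dots and, more generally, with a TQFT realising the underlying bordism invariant on $\Omega_n$'' assumes precisely what has to be proved: the entire difficulty of the theorem is to produce, for an \emph{arbitrary} $\eta \in \Hom(\Omega_n, \bR^\x_+)$, an invertible TQFT restricting to $\eta$ on closed manifolds. The signature only accounts for the one-parameter subgroup $\{t^{\sigma}\}$, whereas for $n \equiv 0 \bmod 4$ the group $\Hom(\Omega_n,\bR^\x_+)$ has rank equal to the rank of $\Omega_n$ (already $2$ for $n=8$, where the paper's own example $\exp(p_2)$ is not realized by the signature). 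The paper fills exactly this gap with a genuinely new construction: since $\Omega_{n-1}$ is finite of order $l$ when $4\mid n$, one chooses for each closed connected $\sg$ a manifold $B_\sg$ with $\pl B_\sg = \coprod_l \sg$, caps off $l$ parallel copies of a cobordism $M$ to obtain a closed manifold $C(M)$, and sets $\mS(\xi)(M)=\delta\inv\big(\xi(C(M))^{1/l}\big)$; the gluing axiom is then verified using Lemma \ref{LEM} together with the fact that $\xi$ kills doubles, and the normalization $B_\emp=\emp$ gives $|\Psi_n|\circ\mS=\mathrm{id}$. This construction is unavoidably non-canonical (it depends on the choice of the $B_\sg$), as the paper's closing example with $D^8$ versus $\bC P^4-\mathring{D^8}$ shows, so no appeal to an already-existing TQFT for $\xi_-$ can be made.

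A secondary issue: your reduction of the multiplicativity of $s(\xi)(X)=\delta\inv\big(\sqrt{\xi([D(X)])}\big)$ to Lemma \ref{LEM} is under-justified. With $X_1=M$, $X_2=\overline{N}$, $X_3=\overline{M}$ the lemma requires the diffeomorphisms to identify the \emph{entire} boundaries, and it yields $[M\cup_f N]+[\overline{M\cup_f N}]=[D(M)]+[D(\overline N)]$, which is your identity only when the gluing consumes all boundary components; Theorem \ref{first} demands multiplicativity for gluings along arbitrary unions of boundary components, where $M\cup_f N$ still has boundary, so the identity $[D(M\cup_f N)]=[D(M)]+[D(N)]$ in $SKK_n$ needs a separate (if plausible) argument. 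Note also that the symmetric part is handled more simply in the paper: by Theorem \ref{class} and torsion-freeness of $\bR^\x_+$ it is just the subgroup $\chi^*$ generated by $\exp\circ\chi$, which defines a TQFT directly via Theorem \ref{first} and the additivity of $\chi$ in even dimensions, with no doubling needed.
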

\begin{proof} Checking exactness at the term $\p{nTQFT}^\x_{\bR}$ is easy. Because of the absolute value bars in the definition of $| \Psi_n |$, it is clear that $| \Psi_n| \circ i_n$ is trivial. Also, if $|\Psi_n|(\mT)$ is the trivial $SKK_n$ invariant, then \beq | \delta(\mT([M]_{SKK}))|=1 \eeq on all closed manifolds $X$. This shows that  \beq \mT([M]_{SKK})=\delta \inv ( \pm 1) \eeq and $\mT \in \mathscr{T}^n_\bR(1,-1)$.

The difficult part is defining the splitting homomorphism \beq \mS: \Hom{(SKK_n, \bR_+^\x)} \to \p{nTQFT} ^\x _\bR. \eeq That is, given an $n$-dimensional $SKK$ invariant $\xi$ with values in the positive reals, we must find an invertible TQFT $\mS(\xi)$ such that $|\Psi_n|(\mS(\xi))=\xi$. This cannot be a completely arbitrary assignment, because $\mS$ must be a homomorphism.

We first make use of Theorem \ref{class} to describe $\Hom{(SKK_n, \bR_+^\x)}$. We let $\chi^*$ denote the subgroup of $\Hom{(SKK_n, \bR_+^\x)}$ given by multiples of the exponential map composed with the Euler characteristic. Since $\bR_+^\x$ is torsion-free, we have \beq \Hom{(SKK_n, \bR_+^\x)} =\begin{cases} \qquad \quad \  0 & n \ \mathrm{odd} \\ \qquad \quad \hspace{.3mm} \chi^* & n \equiv 2 \ \mathrm{mod}  \ 4 \\ \chi^* \oplus \Hom{(\Omega_n, \bR_+^\x)} & n \equiv 0 \ \mathrm{mod} \ 4 \end{cases} \eeq   Here we are using the fact that $\Om_n$ is finite for $n \not\equiv 0 \mod 4$.

We know exactly how to define $\mS$ on the direct summands given by $\chi^*$; $\exp \circ \chi$ is a generator of $\chi^*$, so we simply take $S(\chi)(M)= \delta \inv(\exp{(\chi(M)))}$ for a cobordism with $n$-manifold $M$. By Theorem \ref{first}, this defines an invertible TQFT, and we obviously have \beq |\Psi_n| \circ \mS = id \eeq on these direct summands. So it only remains to define $\mS$ on  $\Hom{(\Omega_{n}, \bR_+^\x)}$ where $4|n$. That is, we can assume $4|n$ and $\xi \in \Hom{(\Omega_{n}, \bR_+^\x)}$. Now we take the necessary steps to define $\mT$.

Under these assumptions, $\Omega_{n-1}$ is finite. Let $l$ be the order of $\Omega_{n-1}$. Then for every closed connected $(n-1)$-manifold $\sg$, we can choose an $n$-manifold $B_\sg$ with boundary given by \beq \pl B_\sg= \coprod_l \sg. \eeq For simplicity we require $B_\emp=\emp$.  Now let \beq M:\coprod \limits _{i=0} ^n \sg_i \rsa \coprod \limits _{j=0} ^m \sg_j \eeq be a cobordism, where each $\sg_i$ and $\sg_j$ is connected. Let $\phi_{i,{\textnormal{in}}}$ and $\phi_{j,{\textnormal{out}}}$ be the in-boundary and out-boundary diffeomorphisms of the components of $\pl M$. Before we give the definition, we form a closed manifold $C(M)$ as follows: Let $\phi^*_{\textnormal{in}}$ be the orientation-preserving diffeomorphism \beq \coprod_{i=0} ^n \pl B_{\sg_i} \to \coprod_l \pl_{\textnormal{in}}\overline{M} \eeq given by the appropriate $\phi_{i, {\textnormal{in}}}$ on each component. Also let $\phi^*_{\textnormal{out}}$ be the orientation-preserving diffeomorphism \beq \coprod_{j=0} ^m \pl {B}_{\sg_j} \to \coprod_l \pl_{\textnormal{out}}M \eeq given by the appropriate $\phi_{j,{\textnormal{out}}}$ on each component. Now define \beq C(M)= \Big( \coprod \limits _{i=0} ^n B_{\sg_i} \Big) \  \underset{\phi_{\textnormal{in}}^*}{\bigcup} \ \Big( \coprod_l M \Big) \  \underset{(\phi_{\textnormal{out}}^*) \inv}{\bigcup} \ \Big( \coprod \limits _{j=0} ^m \overline{B}_{\sg_j} \Big). \eeq Now we can define $\mS(\xi)(M)$ by the following equation \beq \label{def} \mS(\xi)(M)=\delta \inv( \xi(C(M))^{1/l}) \eeq

$\mS$ is clearly a homomorphism. We need to check the axioms of Definition \ref{dTQFT} to show that $\mS(\xi)$ is an $n$-TQFT. Checking (1) first, suppose that \beq M: \coprod \limits _{i=0} ^n \sg_i \rsa \coprod \limits _{j=0} ^m \sg_j \ \mathrm{and} \ N: \coprod \limits _{i=0} ^n \sg_i \rsa \coprod \limits _{j=0} ^m \sg_j \eeq are equivalent coborisms. We will set \beq \sg_0= \coprod \limits _{i=0} ^n \sg_i \ \mathrm{and} \ \sg_1= \coprod \limits _{j=0} ^m \sg_j \eeq for notational simplicity. In addition, we will have $\phi_{\textnormal{in}}$ and $\phi_{\textnormal{out}}$ be the boundary diffeomorphisms of $M$ and $\phi_{\textnormal{in}}'$ and $\phi_{\textnormal{out}}'$ be the boundary diffeomorphisms of $N$. Since $\xi$ evaluates diffeomorphic manifolds equally, we need only show that $C(M)$ and $C(N)$ are diffeomorphic. Since the in- and out-boundaries of $M$ and $N$ are the same, (1) will hold by Equation \eqref{def}. We have diffeomorphisms for each of the gluing components of $C(M)$, which include \beq \begin{aligned} 
id: & \coprod \limits _{i=0} ^n B_{\sg_i}  &\to& \ \  \coprod \limits _{i=0} ^n B_{\sg_i} \\
\psi: & \ N  &\to& \ \  M \\
id: & \coprod \limits _{j=0} ^m B_{\sg_j}  &\to&\ \  \coprod \limits _{j=0} ^m B_{\sg_j}\end{aligned} \label{diffeo} \eeq for each of the gluing components of $C(M)$. Recall that the diagram
\bcomd & M & \\
\sg_0 \arrow{ur}{\phi_{\textnormal{in}}} \arrow{dr}[swap]{\phi'_{\textnormal{in}}} & & \sg_1 \arrow{dl}{\phi'_{\textnormal{out}}} \arrow{ul}[swap]{\phi_{\textnormal{out}}} \\
& N \arrow{uu}[swap]{\psi}& \label{40}
\ecomd commutes. This amounts to the assertion that these diffeomorphisms of \ref{diffeo} agree on the glued areas of $C(M)$ and $C(N)$. Thus $C(M)$ and $C(N)$ are diffeomorphic, and (1) holds. 

Now we check (2). Let \beq M:  \coprod \limits _{i=0} ^n \sg_i \rsa \coprod \limits _{j=0} ^m \sg_j \  \mathrm{and} \ N:  \coprod \limits _{j=0} ^m \sg_j \rsa \coprod \limits _{k=0} ^p \sg_k \eeq be two cobordisms. To prove (2), we apply Lemma \ref{LEM}. Keeping the notation from the construction of $C(M)$, set \beq 
\begin{split} X_1 & = \Big(\coprod _{i=0} ^n B_{\sg_i}\Big) \underset{\phi_{\textnormal{in}}^*}{\bigcup} \Big(\coprod _l M \Big) \\
X_2 & =\coprod _{j=0} ^m B_{\sg_j} \\
X_3 &=  \Big( \coprod _l N \Big) \underset{(\phi_{\textnormal{out}}^{*'}) \inv}{\bigcup}\Big(\coprod _{i=0} ^n \overline{B}_{\sg_i}\Big) \\
f &= (\phi_{\textnormal{out}}^*) \inv \\
g &= \phi_{\textnormal{in}}^{*'}. \end{split} \eeq

\noindent Lemma \ref{LEM} gives \beq [X_1 \underset{f}{\cup} \overline{X}_2] + [X_2 \underset{g}{\cup} {X_3}] =[X_1 \underset{g \circ f}{\cup} {X_3}] + [D(X_2)] \eeq in $SKK_n$, which gives in our case \beq [C(M)] + [C(N)]= [C(MN)] + [D(X_2)]. \eeq

\noindent Now note that because $[D(X_2)]\label{008}$ bounds and $\xi \in \Hom{(\Omega_n, \bR_+^\x)}$, we have $\xi([D(X_2)])=1$. Thus \beq \begin{split} 
\mS(\xi)(MN) &= \delta \inv( \xi(C(MN))^{1/l}) \\
 & =\delta \inv(\xi(C(M))^{1/l} \cdot \xi(C(N))^{1/l}) \\
 & =\delta \inv( \xi(C(M))^{1/l}) \circ \delta \inv( \xi(C(N))^{1/l}) \\
 &=\mS(\xi)(M) \circ \mS(\xi)(N), \end{split} \eeq as desired. Thus (2) holds.
 
 It should be clear that \beq C(M \coprod N)=C(M) \coprod C(N). \eeq From this (3) clearly follows. For (4), note that for a cylinder cobordism \beq \Big( \coprod \limits _{i=0} ^n \sg_i\Big) \x I:  \coprod \limits _{i=0} ^n \sg_i \rsa  \coprod \limits _{i=0} ^n \sg_i, \eeq we have \beq C\Big{(\Big(} \coprod \limits _{i=0} ^n \sg_i\Big) \x I \Big)= D \Big( \coprod _{i=0} ^n B_{\sg_i} \Big). \eeq Thus \beq \begin{split} \mS(\xi)\Big(\Big( \coprod \limits _{i=0} ^n \sg_i\Big) \x I \Big)& = \delta \inv\Big( \xi\Big(C\Big(\Big( \coprod \limits _{i=0} ^n \sg_i\Big) \x I \Big)\Big)^{1/l}\Big) \\
  & = \delta \inv\Big( \xi\Big(D\Big( \coprod _{i=0} ^n B_{\sg_i} \Big)\Big)^{1/l}\Big)  \\
  & = \delta \inv(1) \\
  & = id_{\bR}, \end{split} \eeq and (4) holds. (5) holds trivially. Thus $\mS(\xi)$ is an invertible $n$-TQFT. 
  
  Lastly, we need to check that \beq |\Psi_n| \circ \mS(\xi)=\xi \eeq for all $\xi \in \Hom{(\Omega_n, \bR_+^\x)}$. By our requirement that $B_\emp=\emp$, we have for each closed manifold $M$ \beq C(M)=\coprod_l M. \eeq Thus \beq \begin{split} |\Psi_n| \circ \mS(\xi)(M) &= |\Psi_n|( \delta \inv( \xi(C(M))^{1/l})) \\
   &= |\Psi_n|( \delta \inv(\xi(M))) \\
   &= \xi(M). \end{split} \eeq \end{proof}

The construction of the splitting homomorphism $\mS$ is \tit{not} independent of the choices of the $B_\sg$'s. To illustrate this, let $\xi \in \Hom{(SKK_8, \bR^\x_+)}$ be the bordism invariant given by $\xi(M)=\exp{(p_2(M))}$, where $p_2$ denotes the Pontryagin number given by the trivial partition of a two-element set. Also let $D^8$ and $\mathring{D^8}$ denote the closed and open $8$-disks respectively. Now consider the cobordism \beq {D^8}: \emp \rsa S^7. \eeq  Since $\Om_7$ has order $1$,  $D^8$ and $ \bC P^4-\mathring{D^8}$ are both sufficient choices for $B_{S^7}$. Now defining $\mS$ with the choice $D^8$, we have \beq \begin{split} \mS(\xi)(D^8) &= \delta \inv (\xi(S^8)) \\ & = \delta \inv (\exp{(p_2(S^8))}) \\ & = \delta \inv (\exp{(0)}) \\ & =id_{\bR}. \end{split} \eeq Defining $\mS$ with the choice $ \bC P^4-\mathring{D^8}$, we have \beq \begin{split} \mS(\xi)(D^8) &= \delta \inv (\xi(\bC P^4)) \\ & = \delta \inv (\exp{(p_2(\bC P^4))}) \\ & = \delta \inv (\exp{(10)}) \\ &\neq id_{\bR}. \end{split} \eeq Given a closed manifold $\sg$ that bounds, there is no canonical choice of manifold $M$ with $\pl M = \sg$. This is why we do not explicitly define $\mS$. An explicit definition is not necessary, however, for showing that the sequence splits.

%\section{Final Remarks}
%
%
%We have made significant progress in describing the homomorphism $\Psi_n$, but there are still a number of questions that need answering. First, we would like to give a complete description of $\mathscr{T}^n_\bk$. Theorem \ref{nontriv} gives a class of examples of elements of $\mathscr{T}^n_\bk$. We do not know if there are any elements of $\mathscr{T}^n_\bk$ that are not of this form.  Lemma \ref{lem1} gives an indication that there may be no other such elements of $\mathscr{T}^n_\bk$, but difficulties arise in a proof of this when $\Om_{n-1} \neq 0$. 
%
%
%The group we would really like to describe is $\mathscr{T}^n_\bR(1,-1)$. We could reduce this to describing $\mathscr{T}^n_\bR$ if we could show that $\Psi_n: \p{nTQFT}^\x_{\bR} \to \Hom{(SKK_n, \bR^\x)}$ is surjective. But in order to do that, we would need to find a way for $\{-1, 1\}$-valued $SKK$ invariants to be identified with invertible TQFTs. The approach we took in the case of positive reals does not work here since $\{-1,1\}$ does not allow for rational exponentiation. 
%
%If we were to show that the sequence $$0 \to \mathscr{T}^n_\bR \xra{i_n} \p{nTQFT}^\x_{\bR} \xra{\Psi_n} \Hom{(SKK_n, \bR^\x)} \to 0$$ is split exact and give a complete description of $\mathscr{T}^n_\bR$, we would have a complete description of $\p{nTQFT}^\x_{\bR}$.

\bibliographystyle{alpha}

\bibliography{biblio-REU}

\end{document}